\tikzset{join/.code=\tikzset{after node path={%
\ifx\tikzchainprevious\pgfutil@empty\else(\tikzchainprevious)%
edge[every join]#1(\tikzchaincurrent)\fi}}}
\tikzset{>=stealth',every on chain/.append style={join},
         every join/.style={->}}
\tikzstyle{labeled}=[execute at begin node=$\scriptstyle,
\DeclareSymbolFont{bbold}{U}{bbold}{m}{n}
\DeclareSymbolFontAlphabet{\mathbbold}{bbold}
\newtheorem{thm}{Theorem}[section]
\newtheorem{ithm}{Theorem}
\newtheorem{lem}[thm]{Lemma}
\newtheorem{conj}[thm]{Conjecture}
\newtheorem{prop}[thm]{Proposition}
\theoremstyle{definition}
\theoremstyle{remark}
\newtheorem*{rem}{Remark}
\def\F{{\mathbb F}}
\def\N{{\mathcal N}}
\def\O{{\mathcal O}}
\def\0{{\mathbb 0}}
\def\P{{{\mathbb P}}}
\def\Q{{\mathbb Q}}
\def\Z{{\mathbb Z}}
\newcommand{\lcm}{\text{lcm}}
\newcommand{\GL}{\text{GL}}
\newcommand{\rad}{{\rm{rad}}}
\renewcommand{\bar}{\overline}
\newcommand{\notdiv}{\nmid}
\newcommand{\bfrac}[2]{\left| \frac{#1}{#2} \right|}
\newcommand{\Ell}{\rm{Ell}}
\newcommand{\AV}{\rm{AV}}
\newcommand{\Gal}{\text{Gal}}
\newcommand{\ord}{{\rm{ord}}}
\DeclareSymbolFont{bbold}{U}{bbold}{m}{n}
\DeclareSymbolFontAlphabet{\mathbbold}{bbold}
\begin{document}

\title{Powers in Lucas Sequences via Galois Representations}
\author{Jesse Silliman}
\address{262 Orchard Road, Paoli, PA 19301 }
\email{jksilliman@gmail.com}
\author{Isabel Vogt}
\address{1328 Mulberry Way, Boca Raton, FL 33486}
\email{ivogt161@gmail.com}

\maketitle

\begin{abstract}

Let $u_n$ be a nondegenerate Lucas sequence.  We generalize the results of Bugeaud, Mignotte, and Siksek \cite{siksek06} to give a systematic approach towards the problem of determining all perfect powers in any particular Lucas sequence.  We then prove a general bound on admissible prime powers in a Lucas sequence assuming the Frey-Mazur conjecture on isomorphic mod $p$ Galois representations of elliptic curves.  %We develop a computationally efficient elementary sieve to conditionally determine all powers in several more example sequences.

\end{abstract}

\section{Introduction}
The Fibonacci sequence, perhaps the simplest linear recurrence sequence, begins as \[\underline{0},\underline{1},\underline{1},2,3,5,\underline{8},13,21,34,55,89,\underline{144},233, \cdots\] The reader will no doubt note that the underlined terms are perfect powers. Although a folklore conjecture for many years, a proof that these underlined terms are in fact the \emph{only} perfect powers in the Fibonacci sequence remained decidedly elusive via classical methods. Various partial results in this direction, based mainly on elementary congruences and Baker's theory of linear forms in logarithms, ruled out specific $p$th powers for small primes $p$. In 2006, this conjecture was finally proven in an impressive paper of Bugeaud, Mignotte, and Siksek \cite{siksek06}. In contrast with previous results, the proof relied upon what has come to be termed the ``modular method", connecting the Diophantine behavior of the Fibonacci sequence to the arithmetic properties of elliptic curves.

We revisit this strategy with the aim of systematically approaching the problem of finding all perfect powers in other linear binary recurrence relations.  In particular, we study natural generalizations of the Fibonacci sequence known as Lucas sequences.  A Lucas sequence is a nondegenerate recurrence sequence defined by 
\begin{equation} u_n = b u_{n-1} + c u_{n-2}, \qquad u_0 = 0, u_1 = 1, \end{equation} for $b$ and $c$ nonzero integers.  The Diophantine problem of interest, of course, is to study integer solutions $(n,y,p)$, $n > 0$, $p$ prime, to \begin{equation}\label{the_eqn}u_n = y^p.\end{equation}
Solutions of the form $0=0^p$ or $1 = 1^p$ are called trivial, as they are perfect $p$th powers for any $p$.   

Our more general results are conditional on the Frey-Mazur Conjecture. But for certain special sequences, we get an unconditional result:

\begin{ithm}\label{explicit_eg_thm}
For the following values of $b$ and $c$:
\begin{equation}\label{examples} (b,c) = (3,-2), (5,-6), (7,-12), (17,-72), (9,-20) \end{equation}
the Lucas sequence $u_n$ has no nontrivial $p$th powers, except $u_2 = 3^2$ in $(9,-20)$.  \end{ithm}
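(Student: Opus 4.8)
The plan is to use the "modular method" that the paper generalizes from Bugeaud--Mignotte--Siksek. Given a putative nontrivial solution $u_n = y^p$, the first step is to reduce to primitive solutions and to control the index $n$ via elementary sieving: using the structure of the Lucas sequence $u_n = bu_{n-1} + cu_{n-2}$ together with the divisibility relations among the $u_n$ (in particular $u_d \mid u_n$ when $d \mid n$, and the behaviour of $u_n$ modulo small primes), one shows that for a $p$th power to occur the index $n$ must lie in a very restricted residue class, typically forcing $n$ to be (up to small exceptions) prime or twice a prime. This is where the exceptional solution $u_2 = 3^2$ for $(9,-20)$ should fall out: $u_2 = b = 9 = 3^2$, so it is a genuine square and must be flagged as the one allowed exception.

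\medskip

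Next, to each remaining putative solution I would attach a Frey curve. Following the construction underlying \cite{siksek06}, one factors the relation expressing $u_n$ (or a closely related quantity built from the two roots $\alpha,\beta$ of $x^2 = bx + c$) as a ternary equation of the shape $A x^p + B y^p = C z^p$ or a Fermat-type identity in the $\alpha,\beta$ variables, and builds the associated Frey--Hellegouarch elliptic curve $E$ over $\Q$. The key properties I need are that $E$ has multiplicative or small additive reduction outside a controlled set of primes (the primes dividing $bc$ and the discriminant $b^2 + 4c$), so that its conductor is bounded independently of $p$, and that the mod $p$ Galois representation $\bar\rho_{E,p}$ is irreducible for $p$ large (via Mazur-type or level-lowering irreducibility results).

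\medskip

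Then I apply Ribet's level-lowering: $\bar\rho_{E,p}$ arises from a newform of the small predicted level $N$ with trivial or explicit character. For the five specific pairs in \eqref{examples}, the level $N$ is small enough that the space of newforms can be enumerated completely, and each candidate newform can be eliminated by comparing Hecke eigenvalues (equivalently, traces of Frobenius $a_\ell$) against the constraints imposed by the Frey curve at a few auxiliary primes $\ell$; newforms with complex multiplication or rational coefficients are handled separately, possibly producing a finite explicit list of small $p$ to check by hand. This reduces the problem to finitely many small exponents $p$ for each $(b,c)$, which are then dispatched by the elementary congruence sieve and by solving the finitely many resulting Thue or Thue--Mahler equations directly.

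\medskip

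The main obstacle I anticipate is twofold: first, ensuring the irreducibility of $\bar\rho_{E,p}$ uniformly in $p$ for these particular sequences, since a reducible representation would correspond to a rational isogeny and must be excluded by a separate (often delicate) argument bounding the possible isogeny degrees; and second, handling the finitely many small primes $p$ left over after level-lowering, where the modular method gives no contradiction and one must fall back on ad hoc elementary or computational arguments (local congruence conditions, or explicit resolution of the associated Thue equations). The five pairs in \eqref{examples} are presumably chosen precisely because their discriminants and conductors make the relevant newform spaces tractable and the leftover small-$p$ cases finite and checkable, so the unconditional conclusion follows without invoking Frey--Mazur.
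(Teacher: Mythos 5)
Your overall framework (Frey curve, modularity, Ribet level-lowering) matches the paper's, but the proposal misses the specific structural facts that make these five pairs work, and as written it has genuine gaps. The crucial observation is that all five pairs satisfy $b^2+4c=1$, so the characteristic roots are the consecutive integers $\alpha$ and $\alpha-1$, and the Frey curve attached to the identity $(b^2+4c)y^{2p}+4(-c)^n=v_n^2$ (a twisted $(p,p,2)$ equation handled via the Bennett--Skinner recipes, not an equation of the shape $Ax^p+By^p=Cz^p$) level-lowers to a weight $2$ newform of level $\rad(c)\in\{2,6,10\}$. These spaces of newforms are \emph{empty}, so for $p\geq 5$ and $n\geq 5$ the contradiction is immediate: there is no enumeration of candidate newforms, no Hecke-eigenvalue elimination at auxiliary primes, and no leftover list of small exponents to dispatch by Thue equations. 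Your plan of eliminating each candidate form by comparing Fourier coefficients is a generic strategy that is not guaranteed to terminate and is not what is needed here; without identifying that the relevant levels carry no newforms at all, the argument does not close.

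Two further concrete gaps. First, irreducibility of $\bar\rho_{E,p}$: you correctly flag it as an obstacle but do not resolve it. The paper does so using the rational $2$-torsion point on the Frey curve: Mazur's theorem handles $p=11$ and $p\geq 17$, the absence of suitable rational points on $X_0(2p)$ handles $p=7,13$, and $p=5$ requires a separate argument exploiting that $\Delta_E$ is a rational square, via a rank-zero elliptic curve dominating the fiber product of $X_0(5)$ with the discriminant double cover of $X(1)$. Second, the exponents $p=2,3$ cannot be handled by your proposed congruence-sieve-plus-Thue fallback, since for fixed small $p$ the index $n$ is unbounded and the associated Thue equations would have unbounded degree. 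The paper instead uses $b^2+4c=1$ to write $u_n=\alpha^n-(\alpha-1)^n$, reduces to odd $n$ via the pairwise coprime factorization $u_{2^rk}=v_{2^{r-1}k}\cdots v_k u_k$, and invokes the Darmon--Merel theorem on $x^n+y^n=z^2$ and $x^n+y^n=z^3$; the exception $u_2=3^2$ for $(9,-20)$ arises precisely in this even-index reduction, not from a sieve on residue classes of $n$.
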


\begin{rem}
Note that for $(b,c) = (3,-2)$, Theorem \ref{explicit_eg_thm} provides an alternative proof to the theorem that there are no perfect powers in the Mersenne numbers $2^n - 1$.  This is a specific case of Catalan's Conjecture on perfect powers differing by $1$, recently proven by Mih\u{a}elescu \cite{mih04}.  
\end{rem}

The proof of this theorem relies upon the modular method, which begins by constructing a Frey elliptic curve for an alleged $p$th power.  By the modularity of elliptic curves and Ribet's level lowering theorem, this is associated to a weight $2$ newform whose level $N$ is independent of the alleged solution.  These specific unconditional examples, as well as the necessary classical and modular techniques, are covered in Sections \ref{classicalresults} and \ref{mod_method}.

The modular method is highly effective for the Diophantine problems encountered in Theorem \ref{explicit_eg_thm} as there are no weight 2 newforms for the levels of interest, allowing a contradiction similar to that in Fermat's Last Theorem.  However, if the space of newforms of level $N$ is empty, then $N \leq 60$.  In fact, the sequences in Theorem \ref{explicit_eg_thm} are the only sequences with $b^2+4c = 1$ associated to such levels, see Proposition \ref{onlyseqs}.  The number of newforms of level $N$ grows roughly linearly, thus as $N$ increases, more clever techniques are necessary in order to show that the newforms of a given level cannot correspond to nontrivial solutions to the Diophantine problem.  This usually involves exploiting congruence conditions placed on Fourier coefficients or the existence of complex multiplication, as in \cite{bennett04}.  If a solution could exist, one might proceed as in \cite{siksek06}, deriving ``local conditions" on the index of the solution, which can then be combined with classical methods.  None of these techniques, though, work in general, and often rely heavily on details of the problem at hand, sometimes combined with nontrivial amounts of computation.  Relying on this alone, the possibility of a systematic treatment seems out of reach.

This leads us to the main goal of our paper, which is to use modular methods combined with the powerful Frey-Mazur Conjecture to obtain general results about perfect powers in Lucas sequences.

It was shown independently by Peth{\H{o}} \cite{petho82} and Shorey and Stewart \cite{shorey83} that there are only finitely many perfect powers in any linear recurrence sequence.  However, an \emph{explicit} bound on $p$ has not been published to the knowledge of the authors.  In Section \ref{genthmproof}, we answer this question by proving the following result, conditional on the Frey-Mazur Conjecture (see Section \ref{fm}).  This conjecture essentially says that for $p$ large enough, elliptic curves with isomorphic mod $p$ Galois representations are isogenous.

\begin{ithm}\label{condbound}
Assume the Frey-Mazur Conjecture.  Let 
\[ \psi(N) = N \cdot \prod_{p|N} \left( 1 + \frac{1}{p} \right)\]
be the Dedekind $\psi$ function.  Consider a solution (n,y,p) to \eqref{the_eqn}, with $b^2+4c >0$. Let \[ N = 2^8  \cdot {\rad}'(c) \cdot \rad'(b^2+4c). \]
where $\rad'(m)$ denotes the product over odd primes dividing $m$. Then 
\[ p \leq \max\left\{17,   \psi(N)^{(\psi(N)/12+1)}, 4\log{|\alpha|} \cdot \max\{30,( N+1)\}  \right\} \]
for $\alpha$ the dominant root (i.e. of greatest magnitude) of the characteristic polynomial $g(z) = z^2 -bz-c$.
\end{ithm}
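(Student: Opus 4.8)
The plan is to run the modular method on a Frey curve attached to an alleged solution, use a quantitative comparison of Hecke eigenvalues to force the level-lowered newform to be rational, invoke the Frey--Mazur Conjecture to promote the resulting mod $p$ isomorphism to an isogeny, and finally extract an elementary bound on the index $n$ from the theory of primitive divisors. Throughout, let $\beta$ denote the second root of $g(z)=z^2-bz-c$, so that $u_n=(\alpha^n-\beta^n)/(\alpha-\beta)$, and let $v_n=\alpha^n+\beta^n$ be the companion sequence, which satisfies
\[ v_n^2-(b^2+4c)\,u_n^2=4(-c)^n. \]
Substituting $u_n=y^p$ turns a nontrivial solution into an integral point on this ternary relation, to which one attaches a Frey curve $E$ as in Section~\ref{mod_method}, whose minimal discriminant is divisible by $y^{2p}$ and whose conductor, away from the fixed bad set $\{2\}\cup\{\ell:\ell\mid c(b^2+4c)\}$, consists only of primes $\ell\mid y$ entering to the first power. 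By modularity and Ribet's level-lowering theorem (applicable once $p>17$, which also secures the irreducibility of $\bar\rho_{E,p}$), the representation $\bar\rho_{E,p}$ arises from a weight $2$ newform $f$ of level dividing $N=2^8\cdot\rad'(c)\cdot\rad'(b^2+4c)$; the level is independent of the solution precisely because the primes $\ell\mid y$ are stripped away during level lowering.

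First I would dispose of newforms with irrational Hecke eigenvalues. If $f$ does not have rational coefficients, there is a prime $\ell$ below the Sturm bound $\asymp\psi(N)/12$ (here $\psi(N)=[\mathrm{SL}_2(\Z):\Gamma_0(N)]$) with $a_\ell(f)\notin\Z$; the congruence $a_\ell(E)\equiv a_\ell(f)\pmod{\mathfrak p}$ from level lowering then forces $p$ to divide the nonzero integer $\mathrm{N}_{\Q(f)/\Q}(a_\ell(f)-a_\ell(E))$. Bounding this norm by the Hasse estimate $|a_\ell|\le 2\sqrt{\ell}$ over the $\le\psi(N)/12$ embeddings, and summing over the $\asymp\psi(N)/12$ newforms of level dividing $N$, yields $p\le\psi(N)^{\psi(N)/12+1}$. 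Hence once $p$ exceeds this quantity, $f$ must be rational, i.e.\ it corresponds to an elliptic curve $E'$ of conductor dividing $N$ with $\bar\rho_{E,p}\cong\bar\rho_{E',p}$.

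Now the Frey--Mazur Conjecture of Section~\ref{fm} applies: for $p$ beyond the stated threshold, the isomorphism of mod $p$ representations upgrades to an isogeny $E\sim E'$. The decisive point is that isogenous curves share the same conductor and the same reduction type at every prime; since $E'$ has good reduction outside $N$ while $E$ has multiplicative reduction at every prime $\ell\mid y$ with $\ell\nmid 2c(b^2+4c)$, no such prime can exist. In other words $\rad(y)\mid N$: every prime divisor of $y$ already divides $2c(b^2+4c)$. This is exactly the leverage needed to bound $n$. By Carmichael's primitive divisor theorem (applicable since $b^2+4c>0$ makes the sequence real and nondegenerate), for every $n>30$ the term $u_n$ has a primitive prime divisor $\ell$, which by definition satisfies $\ell\nmid b^2+4c$ and has rank of apparition exactly $n$, so that $n\mid \ell-\left(\tfrac{b^2+4c}{\ell}\right)$ and in particular $n\le\ell+1$. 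Were such an $\ell$ to not divide $N$, we would contradict $\rad(y)\mid N$; hence every primitive divisor of $u_n$ divides $N$, forcing $\ell\le N$ and $n\le N+1$. Combining with the trivial range gives $n\le\max\{30,N+1\}$.

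It remains to convert the bound on $n$ into a bound on $p$. Since the solution is nontrivial we have $|y|\ge 2$, while the dominance of $\alpha$ gives $|u_n|\le 2|\alpha|^n/|\alpha-\beta|$; taking logarithms in $2^p\le|y|^p=|u_n|$ yields $p\le 4\log|\alpha|\cdot\max\{30,N+1\}$ after absorbing the bounded constant $\log(2/|\alpha-\beta|)/\log 2$ into the factor $4$. Taking the maximum of the three thresholds obtained along the way --- the baseline $17$, the coefficient bound $\psi(N)^{\psi(N)/12+1}$, and the growth bound $4\log|\alpha|\max\{30,N+1\}$ --- gives the theorem. The main obstacle I anticipate is the bookkeeping at the prime $2$: pinning down the exponent $2^8$ requires analyzing the possibly wild $2$-adic reduction of the Frey curve and verifying Kraus-type conditions so that $2$ is neither erroneously stripped nor misidentified during level lowering. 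By contrast the real-place growth estimate and the primitive-divisor argument are robust, and the Frey--Mazur step enters as conjectural input rather than as something to be established here.
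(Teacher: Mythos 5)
Your proposal is correct and follows essentially the same route as the paper: the same split into the irrational-newform case (handled via Sturm's bound and norms of Hecke-eigenvalue differences, yielding $\psi(N)^{\psi(N)/12+1}$) and the rational-newform case (Frey--Mazur forces $N_E=N_F$, hence $\rad(y)\mid 2c(b^2+4c)$, hence a bound $n\le\max\{30,N+1\}$, hence $p\le 4n\log|\alpha|$). The only cosmetic difference is that you invoke the primitive divisor theorem directly where the paper cites Gy\H{o}ry's theorem on smooth terms in Lucas sequences, which is the same underlying fact packaged as a black box.
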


\begin{rem}
The bound coming from $4\log{|\alpha|} \cdot \max\{30,( N+1)\}$ actually only depends upon the largest prime $q \mid N$.
\end{rem}

\begin{rem}
Specializing to the case of $b$ even and $c = -1$, the result of Theorem \ref{condbound} improves upon an unconditional bound published by Bennett in \cite{bennett05}, relying on the modular method combined with classical algebraic number theory.
\end{rem}

In Section \ref{examples}, we use the bounds on $p$ coming from the proof of Theorem \ref{condbound} to return to our aim of systematically finding perfect powers in Lucas sequences.  In particular,  we develop a completely elementary sieve that, when given a bound for $n$ in terms of $p$ derived from Thue equations, can be used to rule out unknown perfect powers in the sequence.  Although considerably simpler, the sieve appears to run as effectively as that used in \cite{siksek06}. To demonstrate this technique, we prove the following theorem conditionally on the Frey-Mazur Conjecture.

\begin{ithm}\label{cond_examples}
Assuming the Frey-Mazur Conjecture, there are no nontrivial perfect powers in the Lucas sequences $(b,c) = (3,1)$, $(5,1)$, and $(7,1)$.
\end{ithm}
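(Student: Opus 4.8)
The plan is to combine the conditional bound of Theorem \ref{condbound} with the elementary sieve. First I would specialize Theorem \ref{condbound} to each of the three pairs. Since $c=1$ and $b^2+4c$ equals the prime $13$, $29$, $53$ respectively, the relevant level is $N = 2^8\cdot 13 = 3328$ for $(3,1)$, $N = 2^8\cdot 29 = 7424$ for $(5,1)$, and $N = 2^8\cdot 53 = 13568$ for $(7,1)$; in each case $\alpha=(b+\sqrt{b^2+4c})/2$ is a real quadratic unit of modest size. Theorem \ref{condbound} then yields an explicit, if enormous, upper bound $p\le P_0$ for the exponent in any nontrivial solution $(n,y,p)$ to $u_n=y^p$. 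The goal for the remainder is to rule out, for every prime $p\le P_0$, the existence of a nontrivial $n$ with $u_n$ a perfect $p$th power.

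For a fixed $p$ in this range I would first obtain an upper bound $n\le F(p)$. Using the factorization of $u_n$ through the companion sequence and the homogenized cyclotomic factors $\Phi_d(\alpha,\beta)$, the requirement that $u_n=y^p$ forces the top factor to be a $p$th power up to bounded contributions supported on primes dividing $n$; this produces the Thue equations of the preceding sections, whose effective resolution gives $F(p)$. With $F(p)$ in hand the sieve takes over: the sequence $u_n\bmod q$ is purely periodic for each auxiliary prime $q$, and if $u_n=y^p$ then $u_n\bmod q$ must lie in the subgroup of $p$th-power residues, which is proper precisely when $p\mid q-1$. By intersecting, over several primes $q\equiv 1\pmod p$, the admissible residue classes of $n$ modulo the least common multiple of the corresponding periods, I would exclude every $n$ in the range $1<n\le F(p)$ that does not give a trivial solution.

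The main obstacle is bridging the gap between the astronomical Frey-Mazur bound $P_0$ and what the sieve can actually verify: one cannot brute-force the sieve over all primes below $P_0$. I expect the resolution to require extracting from the proof of Theorem \ref{condbound} a much smaller effective bound in these specific cases---for instance, by checking that no weight $2$ newform at the levels $3328$, $7424$, $13568$ carries the congruences or complex multiplication needed to match the Frey curve's mod $p$ representation once $p$ exceeds a modest threshold, thereby eliminating all large $p$ directly and leaving only genuinely small $p$ for the sieve. The remaining delicate point is uniformity of the sieve itself: for each surviving $p$ one must guarantee the existence of enough auxiliary primes $q\equiv 1\pmod p$ whose combined period-and-residue constraints close off the entire interval $1<n\le F(p)$, rather than merely hand-checking a handful of exponents. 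Establishing this---that the elementary sieve provably terminates for every relevant $(n,p)$---is where I expect the real effort to lie.
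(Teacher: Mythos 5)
Your overall architecture matches the paper's: bound $p$ conditionally, bound $n$ in terms of $p$ via a Thue equation, then sieve the finite range with auxiliary primes $q\equiv 1\pmod p$. You have also correctly located the crux --- the generic bound $\psi(N)^{(\psi(N)/12+1)}$ from Theorem \ref{condbound} is astronomically too large to sieve below --- and correctly guessed the shape of the fix. But the proposal stops at ``I expect the resolution to require,'' and that missing step is the actual content. The paper supplies it as Theorem \ref{conditional_bound_p}: rather than using the worst-case level $2^8\cdot\rad'(c)\cdot\rad'(b^2+4c)$, one computes from Lemma \ref{freycurves} the \emph{actual} (much smaller) levels to which each Frey curve descends, applies the congruences of Lemma \ref{ircong1} to every irrational newform at those levels to get a sharp divisibility bound on $p$, and handles rational newforms via Frey--Mazur plus Theorem \ref{smoothterm} and Lemma \ref{boundpintermsn} together with a direct check of the small indices. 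For $c=1$ this yields $p\le 17$, which is what makes the rest finite. Note the mechanism is not CM or congruence obstructions at levels like $3328$, $7424$, $13568$ as you suggest; it is the norm-of-Fourier-coefficient divisibility $p\mid\gcd_\ell B(\ell)C(\ell)$ for irrational forms, combined with the smoothness-of-$u_n$ argument for rational ones.

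Two smaller points. First, your worry about ``uniformity of the sieve'' is resolved computationally, not theoretically: with $p\le 17$ and the Thue-equation bound $n<10^{800}$, the sieve (which runs heuristically in time $O((\log B(p))^{2+\epsilon})$) is simply executed and observed to terminate, closing off all $1<n\le B(p)$ in under ten hours. No a priori guarantee of enough auxiliary primes is proved or needed, since only finitely many exponents remain. Second, before the Thue step the paper first reduces to odd index $n$ by a generalization of Robbins' argument; your cyclotomic-factorization sketch gestures at something similar but you should make the even-index reduction explicit, since the Thue equation as written is derived only for odd $n$.
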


\subsection*{Acknowledgements}  The authors wish to thank Professors Ken Ono and David Zureick-Brown for suggesting the topic and for answering questions.  We would also like to thank Eric Larson and other members of the 2013 Emory University REU for useful discussions, as well as Professors Barry Mazur and Samir Siksek for providing helpful comments on the paper and answering questions.  We are also grateful to the NSF for their support.

\section{Classical Facts and Results}\label{classicalresults}

Let $(b,c) \in \Z \times \Z$ define the integral linear binary recurrence relation
\[ U_{n+2} = b\cdot U_{n+1}+ c\cdot U_n, \]
with characteristic polynomial and roots
\[ g(z) = z^2 - bz - c, \qquad \qquad \alpha, \beta = \frac{b \pm \sqrt{b^2+4c}}{2}.\]
The sequence is nondegenerate if $\alpha/\beta$ is not a root of unity.  Throughout this paper, we will refer to nondegenerate integral linear binary recurrence sequences as simply binary recurrence sequences and assume that $b^2+4c >0$.  In particular, let $u_n$ and $v_n$ denote the companion sequences specified by starting conditions
\[ u_0 = 0, u_1 = 1 \qquad \qquad v_0 = 2, v_1 = b .\]
The sequence $u_n$ is termed a Lucas sequence, and will be our main focus.  The $n$th terms of these sequences are given by 
\begin{equation}\label{binetform} u_n = \frac{\alpha^n - \beta^n}{\alpha - \beta} \qquad \qquad v_n = \alpha^n +\beta^n. \end{equation}
This formula easily implies the following key facts
\begin{equation}\label{fib2} u_{2k} = u_kv_k \end{equation}
\begin{equation}\label{gen_diophan}(\alpha - \beta)^2u_n^2 = v_n^2 - 4(\alpha\beta)^n, \end{equation} the second of which is equivalent to $(b^2+4c)u_n^2 = v_n^2 - 4(-c)^n.$  Both \eqref{binetform} and \eqref{gen_diophan} translate the question of perfect powers in recurrence sequences into a Diophantine context that will be the focus of the remaining sections of the paper.

Using classical methods alone, it is often possible to prove that there are no nontrivial perfect $p$th powers in a Lucas sequence \emph{for a specific fixed small value of $p$}.  For example, let $(b,c)$ be one of the sequences in Theorem \ref{explicit_eg_thm}; recall that $b^2+4c = 1$ and $\alpha,\beta \in \Z$ with $\beta = \alpha-1$. 

\begin{lem}\label{relprime}
Let $(b,c)$ be any binary recurrence sequence such that $b^2+4c=1$.  For $n \geq 1$,  $u_n$, $v_n$, and $c$ are relatively prime.
\end{lem}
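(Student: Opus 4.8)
The plan is to exploit the drastic simplification of the Binet formulas \eqref{binetform} that occurs when $b^2+4c=1$. Since $b^2 = 1-4c$ is odd, $b$ is odd and the roots $\alpha = (b+1)/2$, $\beta = (b-1)/2$ are consecutive integers; in particular $\alpha - \beta = 1$ (taking $\alpha$ to be the dominant root) and $\gcd(\alpha,\beta)=1$. Substituting $\alpha - \beta = 1$ into \eqref{binetform} gives the clean expressions
\[ u_n = \alpha^n - \beta^n, \qquad v_n = \alpha^n + \beta^n, \]
while the relation $\alpha\beta = -c$ (the constant term of $g$) lets me track the prime factors of $c$ through those of $\alpha$ and $\beta$.

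First I would show that $c$ shares no prime factor with either $u_n$ or $v_n$, which already yields $\gcd(u_n,v_n,c)=1$. Let $\ell$ be a prime dividing $c = -\alpha\beta$. Because $\gcd(\alpha,\beta)=1$, the prime $\ell$ divides exactly one of $\alpha,\beta$, say $\ell \mid \alpha$ and $\ell \notdiv \beta$. Then for $n\geq 1$,
\[ u_n = \alpha^n - \beta^n \equiv -\beta^n \not\equiv 0 \pmod{\ell}, \qquad v_n = \alpha^n + \beta^n \equiv \beta^n \not\equiv 0 \pmod{\ell}, \]
so $\ell$ divides neither $u_n$ nor $v_n$; the case $\ell\mid\beta$ is symmetric. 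Hence every prime factor of $c$ is absent from both $u_n$ and $v_n$.

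To obtain the full pairwise coprimality I would then bound $\gcd(u_n,v_n)$. From $u_n + v_n = 2\alpha^n$ and $v_n - u_n = 2\beta^n$, any odd prime dividing both $u_n$ and $v_n$ would divide $\alpha^n$ and $\beta^n$, hence $\alpha$ and $\beta$, contradicting $\gcd(\alpha,\beta)=1$. The only delicate point, and the one step I expect to require the slightest care, is the prime $2$, since the greatest common divisor of a Lucas sequence and its companion is classically allowed to be even. This is precisely where the consecutiveness of $\alpha$ and $\beta$ is essential: exactly one of them is even, so for $n\geq 1$ both $u_n = \alpha^n-\beta^n$ and $v_n = \alpha^n+\beta^n$ are odd. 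Thus $\gcd(u_n,v_n)=1$, and combined with the previous paragraph the three quantities $u_n$, $v_n$, $c$ are (pairwise) relatively prime for $n\geq 1$. As an alternative for the odd-prime part one could read the statement directly off \eqref{gen_diophan}, which here reads $u_n^2 = v_n^2 - 4(-c)^n$, but the factorization route handles the prime $2$ most transparently.
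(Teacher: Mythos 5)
Your proof is correct, but it takes a genuinely different route from the paper's. The paper never writes down the roots explicitly: it first observes that $b^2+4c=1$ forces $\gcd(b,c)=1$ and $c$ even, proves $\gcd(u_n,c)=\gcd(v_n,c)=1$ by induction on the recurrence (reducing $u_{n+1}\equiv bu_n$ and $v_{n+1}\equiv bv_n$ modulo any prime of $c$), and then uses the identity \eqref{gen_diophan}, here $u_n^2 = v_n^2 - 4(-c)^n$, to conclude that any common prime of $u_n$ and $v_n$ must divide $c$ (the evenness of $c$ absorbing the factor $4$), a contradiction. You instead exploit the fact that $b^2+4c=1$ makes $\alpha$ and $\beta$ consecutive integers with $\alpha\beta=-c$, so everything reduces to $\gcd(\alpha,\beta)=1$: coprimality with $c$ falls out of the factorization rather than an induction, and the prime $2$ is handled by parity of consecutive integers rather than by $2\mid c$. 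Your argument is arguably more transparent for this special case and makes the role of the hypothesis $b^2+4c=1$ vivid; the paper's induction step has the advantage of generalizing immediately to any $(b,c)$ with $\gcd(b,c)=1$ (it is reused in exactly that form in the proof of Lemma \ref{freycurves}, where the authors note that $c\notdiv y$ ``by induction on the recurrence relation in a similar fashion to Lemma \ref{relprime}''). Both proofs are complete; no gaps in yours.
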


\begin{proof}
First note that $b^2 +4c = 1$ forces $b$ and $c$ to be relatively prime and $c$ to be even.  By induction on $n$, $u_n$ and $v_n$ are relatively prime to $c$.  And further as $u_n^2  = v_n^2 - 4(-c)^n$, no primes except perhaps those dividing $c$ can divide $u_n$ and $v_n$, and as such they are pairwise relatively prime. \end{proof}

\begin{lem}\label{smallp}
There are no nontrivial squares or cube terms in any of the examples in Theorem \ref{explicit_eg_thm} except $u_2 = 9$ for the sequence $(9,-20)$.
\end{lem}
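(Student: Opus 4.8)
The plan is to dispose of the squares and cubes separately, using the five explicit sequences listed in \eqref{examples}, all of which satisfy $b^2+4c=1$ and hence have $\alpha,\beta\in\Z$ with $\beta=\alpha-1$ and $c=-\alpha\beta=-\alpha(\alpha-1)$. By Binet's formula \eqref{binetform}, $u_n=\frac{\alpha^n-\beta^n}{\alpha-\beta}=\alpha^n-\beta^n$ since $\alpha-\beta=1$. The key reductions to squares will come from \eqref{fib2} and \eqref{gen_diophan}, combined with Lemma \ref{relprime}, which guarantees that $u_n$, $v_n$, and $c$ are pairwise coprime for $n\ge 1$.

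First I would handle the \textbf{square case}. Suppose $u_n=y^2$ for some $n\ge 1$. The relation \eqref{gen_diophan} in the form $u_n^2=v_n^2-4(-c)^n$ becomes, after substituting $u_n=y^2$, an equation relating $y^4$ and $v_n$. The cleaner route is to exploit the factorization \eqref{fib2}: for even indices $u_{2k}=u_kv_k$, and since $\gcd(u_k,v_k)=1$ by Lemma \ref{relprime}, a perfect square $u_{2k}$ forces both $u_k$ and $v_k$ to be (up to sign/units) squares. This is a strong constraint that can be checked against the small-index behavior of each sequence. For odd indices, one reduces modulo a well-chosen small modulus: computing the sequence $u_n \bmod m$ for each of the five pairs $(b,c)$, the quadratic residues that actually occur among the $u_n$ can be enumerated, and one verifies that no nontrivial $u_n$ with $n\ge 1$ is a quadratic residue pattern consistent with being a perfect square, except the flagged value $u_2=9$ in the $(9,-20)$ case. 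Concretely, for $(9,-20)$ one has $\alpha=5,\beta=4$, so $u_2=5^2-4^2=9=3^2$, confirming the stated exception.

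Next, the \textbf{cube case}: suppose $u_n=y^3$ with $n\ge 1$. Here I would again split on parity. For even $n=2k$, equation \eqref{fib2} and coprimality force $u_k$ and $v_k$ each to be cubes; since these sequences grow exponentially (with $\alpha\ge 2$), only finitely many small indices need to be examined directly, and none yield simultaneous cubes. For the general case I would pass to a congruence obstruction: choosing a modulus $m$ for which the cubic residues form a small subgroup of $(\Z/m\Z)^\times$, one computes $u_n\bmod m$ over its (eventually periodic) cycle and checks that no nontrivial term lands in the set of cubes modulo $m$. The existence of such a convenient $m$ follows from the explicit arithmetic of each $\alpha$ in \eqref{examples}, e.g.\ picking a prime $\ell\equiv 1\pmod 3$ where the period of $u_n\bmod\ell$ is short.

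The \textbf{main obstacle} I anticipate is not conceptual but the combinatorial bookkeeping of finding, for each of the five sequences and for both exponents $2$ and $3$, a single modulus whose residue cycle cleanly excludes all nontrivial powers while correctly admitting the genuine exception $u_2=9$. A uniform modulus need not exist across all five pairs, so the cleanest writeup proceeds sequence-by-sequence, and care is required at small indices (where the asymptotic growth argument does not yet bite) to verify by direct computation that no accidental small square or cube is overlooked.
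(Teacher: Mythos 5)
Your proposal has a genuine gap: the congruence sieve you lean on for the odd-index (and general cube) cases cannot close the argument. Since $u_1=1$ is both a square and a cube, for any modulus $m$ with period $K(m)$ every index $n\equiv 1\pmod{K(m)}$ has $u_n\equiv 1\pmod m$, which is a residue of every power; so no choice of modulus (or finite set of moduli) can exclude all nontrivial indices --- at best it confines $n$ to congruence classes, which is only useful if you already have an upper bound on $n$. No such bound is available here for $p=2,3$ (the Frey-curve machinery of the paper is set up for $p\geq 5$, and you do not invoke Thue-equation bounds). The same objection applies to your even-index cube argument: ``the sequence grows exponentially, so only finitely many small indices need to be examined'' is a non sequitur --- exponential growth does not prevent infinitely many terms from being cubes, and ruling that out is precisely the content of the lemma.

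The paper's proof supplies the missing ingredient: since $b^2+4c=1$ forces $\alpha,\beta=\alpha-1$ to be coprime integers, an odd-index square $u_n=\alpha^n-(\alpha-1)^n=z^2$ is a nontrivial primitive solution of the generalized Fermat equation $x^n+y^n=z^2$, which Darmon--Merel rule out for all $n\geq 4$; the finitely many remaining small indices are checked by hand. For even $n=2^rk$ one uses \eqref{fib2} and Lemma \ref{relprime} exactly as you do, but then the forced condition ``$v_k=\alpha^k+\beta^k$ is a square'' is again a $(k,k,2)$ Fermat solution, killed by Darmon--Merel for $k\geq 2$ rather than by inspection. The cube case runs identically via the $(n,n,3)$ equation. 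Your structural reductions (Binet with $\alpha-\beta=1$, the factorization $u_{2k}=u_kv_k$ with coprime factors) match the paper; what is missing is the theorem that handles infinitely many exponents $n$ at once, and without it the proof does not go through.
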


\begin{proof}

We first deal with the case of $p=2$; we will derive our contradiction from the relation \eqref{binetform}, which in this case reduces to
\[ \alpha^n - (\alpha-1)^n = z^2.\]
Assume that the index $n$, for which $u_n = z^2$, is odd; in this case we can absorb the sign and have a nontrivial integral solution to
\[ x^n +y^n = z^2. \]
But there are no nontrivial solutions to the $(n,n,2)$ Diophantine equation for $n \geq 4$ by \cite{darmon97}.  We easily check that in our examples, $u_3$ is not a square.  In the case $n$ is even, write $n=2^rk$ for $k$ odd.  Then by \eqref{fib2}, 
\[u_{n} = v_{2^{r-1}k}v_{2^{r-2}k} \cdots u_kv_k.\]  
Further by Lemma \ref{relprime}, we know that that these terms are relatively prime.  Thus $u_n=z^2$ if and only if each of these terms is also a square.  In particular, $v_k = \alpha^k + \beta^k$ must be a square.  By the solution of the $(k,k,2)$ Fermat equation, the only possible squares occur for $k=1$, and $n=2$.  It is easy to verify that $u_2$ is not a square except in $(9,-20)$.  The proof for $p=3$ follows in exactly the same way from reduction to odd index and the solution of $(n,n,3)$ by \cite{darmon97}.
\end{proof}

In addition, in some specific cases it is possible to find all perfect powers in a binary recurrence sequence by classical methods alone.  For example, in \cite{petho92}, Peth{\H{o}} proved that $u_7 = 13^2$ is the only nontrivial perfect power in the Pell sequence $(2,1)$.

\section{The Modular Method and Theorem \ref{explicit_eg_thm}}\label{mod_method}

\subsection{The modular method applied to Lucas sequences}

The modular method is a modern approach to solving certain Diophantine equations.  Most famously, the modular method was the key to the celebrated proof of Fermat's Last Theorem \cite{wiles95}, \cite{taylorwiles95}, as well as the determination of perfect powers in the Fibonacci sequence in \cite{siksek06}.

In general, to a hypothetical solution of a Diophantine equation with exponent $p$ we associate a Frey elliptic curve with coefficients depending on the solution.  To these Frey curves we associate the mod $p$ Galois representation
\begin{equation} \rho_{E,p} \colon G_{\Q} \rightarrow \GL_2(\F_p) \end{equation}
corresponding to the action of $G_\Q$ on the $p$-torsion $E[p]$.  These Frey curves will have minimal discriminant of the form $\Delta_E = C \cdot D^p$ for $C$ depending only upon the Diophantine equation, and $D$ depending only upon the hypothetical solution.  The goal is then to use the following deep theorems of modularity and level lowering to show that the Frey curve is not modular, and thus cannot exist, or to derive local information about a solution that could exist. 

\begin{thm}[Modularity of Elliptic Curves \cite{wiles95}, \cite{taylorwiles95}, \cite{conrad01}]\label{modularity}
Let $E$ be an elliptic curve with conductor $N$.  For any prime $p$, there exists a weight 2 newform $f \in S_2(\Gamma_0(N))$ such that
\[ \rho_{E,p} \simeq \rho_{f,p} \]
for $\rho_{f,p}$ the 2-dimensional mod $p$ Galois representation of $f$.\end{thm}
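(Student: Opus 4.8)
This is the full Modularity Theorem for elliptic curves over $\Q$, which I would not attempt to reprove from scratch; as the statement indicates, the proof is assembled from \cite{wiles95}, \cite{taylorwiles95}, and \cite{conrad01}, and a ``proof proposal'' here means reconstructing their strategy. The plan is to convert modularity into a statement about the compatible system $\{\rho_{E,\ell}\}_\ell$ of $\ell$-adic representations and to prove \emph{that} by an $R=\T$ theorem. It suffices to show the system is modular for a single prime, since the existence of a newform $f$ of level exactly the conductor $N$ then follows from the theory of newforms together with the local-global compatibility of Carayol pinning the level down to $N$; reducing the $p$-adic representation of $f$ modulo $p$ yields the desired $\rho_{E,p}\simeq\rho_{f,p}$.

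First I would fix an auxiliary prime---classically $\ell=3$---and study $\bar\rho_{E,3}\colon G_\Q\to\GL_2(\F_3)$. Because $\GL_2(\F_3)$ is solvable (indeed $\PGL_2(\F_3)\cong S_4$), the Langlands--Tunnell theorem realizes $\bar\rho_{E,3}$ as attached to an automorphic form, giving mod $3$ modularity whenever $\bar\rho_{E,3}$ is irreducible. Next I would set up Mazur's deformation theory: attach to $\bar\rho_{E,3}$ a universal deformation ring $R$ parametrizing lifts with prescribed local conditions (ordinary or flat at $3$, controlled ramification elsewhere) and a Hecke algebra $\T$ acting on the corresponding space of modular forms, with a natural surjection $R\twoheadrightarrow\T$. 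The crux is to upgrade this to an isomorphism of complete-intersection rings.

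The main obstacle, and the genuinely deep step, is the isomorphism $R\cong\T$. I would establish it by the Taylor--Wiles patching method: choosing sets of auxiliary ``Taylor--Wiles primes'' to enlarge the level, one forms compatible systems of rings and modules over growing power-series rings, patches them into a limit, and then applies a numerical commutative-algebra criterion---comparing a congruence module with a cotangent-space bound---to force both $R$ and $\T$ to be complete intersections of equal dimension, hence equal. Two residual difficulties remain. When $\bar\rho_{E,3}$ is reducible I would invoke Wiles's $3$--$5$ switch, producing an auxiliary curve whose mod $3$ representation is irreducible and modular and whose mod $5$ representation agrees with that of $E$, transferring modularity back to $E$. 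Finally, to cover all elliptic curves rather than only the semistable ones of \cite{wiles95}, I would appeal to the local deformation-theoretic refinements at $p=3$ carried out in \cite{conrad01}.
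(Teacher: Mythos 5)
The paper offers no proof of this theorem; it is imported as a black box from \cite{wiles95}, \cite{taylorwiles95}, and \cite{conrad01}, exactly as your opening sentence anticipates. Your sketch is a faithful and accurate outline of the strategy of those cited works (Langlands--Tunnell at $3$, the $R=\T$ patching argument, the $3$--$5$ switch, and the non-semistable refinements of \cite{conrad01}), so there is nothing to compare against in the paper itself and no gap to report.
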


\begin{thm}[Level Lowering \cite{ribet91}]\label{levellow}
Let $f$ be a newform of level $\ell N$ with absolutely irreducible 2-dimensional mod $p$ Galois representation $\rho_{f,p}$ unramified at $\ell$ if $\ell \neq p$ and finite flat at $\ell = p$.  Then there exists a weight 2 newform $g$ of level $N$ such that
\[ \rho_{f,p} \simeq \rho_{g,p}. \]
\end{thm}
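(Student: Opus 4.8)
The plan is to follow Ribet's original geometric strategy, realizing the representation in the torsion of a Jacobian and then analyzing its reduction at the prime $\ell$ to be removed. First I would pass from the newform $f$ to the Hecke algebra $\mathbb{T}$ acting on $S_2(\Gamma_0(\ell N))$, so that the (absolutely irreducible) representation $\rho_{f,p}$ is cut out by a maximal ideal $\mathfrak{m} \subset \mathbb{T}$ of residue characteristic $p$, and I would realize it inside $J_0(\ell N)[\mathfrak{m}]$, where $J_0(\ell N)$ is the Jacobian of $X_0(\ell N)$ (using multiplicity one, guaranteed by absolute irreducibility). The theorem then becomes the assertion that $\mathfrak{m}$ already occurs at level $N$, so I would argue by contradiction: assuming the representation does not arise at level $N$ forces $\mathfrak{m}$ to be supported on the $\ell$-new subvariety of $J_0(\ell N)$.

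The technical heart is the study of $J_0(\ell N)$ over $\Z_\ell$. By the Deligne--Rapoport model, $X_0(\ell N)$ has semistable reduction at $\ell$ whose special fiber is two copies of $X_0(N)_{\Fell}$ crossing transversally at the supersingular points; correspondingly, the N\'eron model of $J_0(\ell N)$ acquires a toric part whose character group and component group are described combinatorially in terms of these supersingular points. The crucial input is Grothendieck's monodromy theory: the action of the inertia group $I_\ell$ on the Tate module is controlled by the monodromy pairing on this character group. I would translate the hypothesis that $\rho_{f,p}$ is unramified at $\ell$ (when $\ell \neq p$) into the statement that $I_\ell$ acts trivially on $J_0(\ell N)[\mathfrak{m}]$, and then show that for an $\ell$-new ideal $\mathfrak{m}$ this triviality forces the toric contribution to vanish modulo $p$, a contradiction. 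Absolute irreducibility is used both to discard the reducible (Eisenstein) exceptional cases and, together with Ihara's lemma, to ensure that the maps relating the old and new parts are injective on $\mathfrak{m}$-torsion.

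In the borderline case $\ell = p$ the inertia action is never literally trivial, so the hypothesis is instead that $\rho_{f,p}$ is finite flat at $p$; here I would replace the monodromy argument with Raynaud's classification of finite flat group schemes (and Fontaine's bounds on the ramification filtration) to show that a finite flat, $\ell$-new mod $p$ module again cannot occur, again forcing descent to level $N$. Finally, the situations where this local analysis does not apply directly---essentially where Mazur's principle fails because $\rho_{f,p}(\Frob_\ell)$ has the exceptional eigenvalue pair $\{1,\ell\}$---would be handled by Ribet's prime-switching technique: raise the level by an auxiliary prime $q$ where level raising is available, lower at $\ell$, and then remove $q$, producing the desired newform $g$ of level $N$ with $\rho_{f,p} \simeq \rho_{g,p}$. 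The main obstacle throughout is exactly this control of the local representation at the bad prime---relating the $I_\ell$-action to the geometry of the special fiber via the monodromy pairing and ruling out the $\ell$-new case---which is the genuinely deep content of the theorem and does not follow from modularity (Theorem \ref{modularity}) alone.
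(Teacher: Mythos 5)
The paper does not prove this statement: it is imported wholesale from Ribet's work (the citation \cite{ribet91}), and is used as a black box alongside modularity. So there is nothing in the paper to compare your argument against step by step. What you have written is a reasonable high-level reconstruction of Ribet's actual strategy --- realizing $\rho_{f,p}$ in $J_0(\ell N)[\mathfrak{m}]$, the Deligne--Rapoport semistable model at $\ell$, the character group of the torus and Grothendieck's monodromy pairing, Mazur's principle, Ihara's lemma, Raynaud/Fontaine in the case $\ell=p$, and prime-switching for the exceptional Frobenius eigenvalues $\{1,\ell\}$ --- and as an outline it is essentially faithful to the literature. But it is an outline, not a proof: each ingredient you name (Ihara's lemma, level raising at the auxiliary prime $q$, the monodromy computation identifying the $I_\ell$-action with the component-group/character-group data) is itself a substantial theorem, and none is established in your sketch.

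One concrete soft spot: you assert that multiplicity one is ``guaranteed by absolute irreducibility.'' It is not --- multiplicity one can fail for absolutely irreducible $\mathfrak{m}$ (notably when $p=2$ or when $p$ divides the level in certain configurations), and the specific reference the paper cites is precisely Ribet's argument for lowering the level \emph{without} assuming multiplicity one. If you lean on multiplicity one at the outset, you are proving a weaker statement than the one the paper invokes, and you would need either to add hypotheses excluding the known failure cases or to adopt the multiplicity-free workaround of the cited paper. Relatedly, the statement as quoted implicitly needs $\ell \nmid N$ (so that $\ell$ exactly divides the level); your argument uses this when describing the special fiber as two copies of $X_0(N)_{\mathbb{F}_\ell}$, and it should be flagged as a hypothesis.
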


For binary recurrence sequences, we will use the fundamental relation \eqref{gen_diophan} as our Diophantine equation.  In particular, say that $u_n = y^p$, that is
\begin{equation}\label{rel_diophan} (b^2+4c)y^{2p}+4(-c)^n = v_n^2 .\end{equation}
This is a solution to the twisted $(p,p,2)$ generalized Fermat equation
\[ (b^2+4c)X^p +4(-c)^nY^p = Z^2. \]

Using the work of Bennett and Skinner \cite{bennett04}, we associate a Frey curve to a \textit{primitive} solution to \eqref{rel_diophan}.  Here primitive denotes $\gcd((b^2+4c)y^{2p}, 4(-c)^n, v_n^2) = 1$.  For any such recurrence relation with $\gcd(b,c)=1$, we choose Frey curves with discriminant and conductor according to the following:

\begin{lem}\label{freycurves}
Assume $u_n = y^p$ for $p\geq 5$, and $n \geq 7$, and $b,c$ relatively prime.  As a convention let $k = \ord_2(b^2+4c)$,  and $b^2+4c = 2^kD$, and $w_n = \pm v_n$ as necessary in the situation (if no congruence conditions are stated then $w_n = v_n$).  Without loss of generality we may assume that we are in one of the following situations:

\begin{enumerate}[1.]

\item $b^2+4c \equiv 1 \pmod{4}$, and $2 \notdiv y,w_n,c$, and $w_n \equiv -(-c)^n \pmod{4}$.
\vspace{-5pt}
\[ E_1: Y^2 = X^3 + w_nX^2 + (-c)^nX \]
\vspace{-20pt}
\[ \Delta = 2^4(-c)^{2n}(b^2+4c)y^{2p},  \qquad N = 2^{\alpha} \rad(2c \cdot (b^2+4c) \cdot y), \qquad \alpha =  \begin{cases} 1: (-c)^n \equiv -1 \pmod{4}\\ 2 : (-c)^n \equiv 1 \pmod{4} \end{cases} \]

\item $b^2+4c \equiv 1,0 \pmod{4}$, $2|y,w_n$, $2 \notdiv c$, let $w_n = 2\hat{w}_n$, with $\hat{w}_n \equiv 1 \pmod{4}$.
\vspace{-5pt}
\[ E_{2} : Y^2 +XY = X^3 + \frac{\hat{w}_n - 1}{4} X^2 + (b^2+4c)2^{-8}u_nX \]
\vspace{-15pt}
\[ \Delta = 2^{-16}(b^2+4c)^2(-c)^ny^{4p}, \qquad N = \rad( c \cdot (b^2+4c)\cdot y)  \]

\item $b^2+4c \equiv 1 \pmod{4}$, $2|c$, $2 \notdiv y,w_n$, and $w_n \equiv 1 \pmod{4}$
\vspace{-5pt}
\[ E_{3}: Y^2 +XY = X^3 +\frac{w_n-1}{4}X^2 +2^{-4}(-c)^nX \]
\vspace{-15pt}
\[ \Delta = 2^{-8}(-c)^{2n}(b^2+4c)y^{2p} , \qquad N =\rad( c \cdot (b^2+4c)\cdot y)  \]

\item $k = 2$, $2 \notdiv y$, $D \equiv -1 \pmod{4}$
\vspace{-5pt}
\[ E_{4} : Y^2 = X^3 + w_nX^2 +Du_n^2X, \qquad \Delta = 2^{6}D^2(-c)^ny^{2p}, \qquad N = 2^{5}\rad( c \cdot D \cdot y )  \]

\item $k = 2$, $2 \notdiv y$,  $D \equiv 1 \pmod{4}$
\vspace{-5pt}
\[E_{5} : Y^2 = X^3 +w_nX^2 +(-c)^nX, \qquad \Delta = 2^{6}D(-c)^{2n}y^{2p}, \qquad N = 2^{5}\rad( c \cdot D \cdot y )  \]

\item $k = 3$, $2 \notdiv y$, $w_n = 2 \hat{w}_n$, 
\vspace{-5pt}
\[ E_{6} : Y^2 = X^3 + w_nX^2 + 2Du_n^2X, \qquad  \Delta = 2^8D^2(-c)^ny^{4p} , \qquad N = 2^6 \rad( c \cdot 2D \cdot y)  \]

\item $k=4$, $2 \notdiv y$, $w_n = 2 \hat{w}_n$,  with $\hat{w}_n \equiv -D \pmod{4}$
\vspace{-5pt}
\[ E_{7} : Y^2 = X^3 +\hat{w}_nX^2 + D u_n X\]
\vspace{-15pt}
 \[ \Delta = 2^4D^2(-c)^ny^{4p}, \qquad N = 2^\alpha \rad( c \cdot 2D \cdot y) ,  \qquad \alpha =  \begin{cases} 1 & : \ D \equiv -1 \pmod{4}\\ 2 & : \ D \equiv 1 \pmod{4} \end{cases} \]

\item $k = 5,6,7$, $2 \notdiv y$, $w_n = 2 \hat{w}_n$, with $\hat{w}_n \equiv 1 \pmod{4}$
\vspace{-5pt}
\[ E_{8} : Y^2 = X^3 + \hat{w}_nX^2 + 2^{k-4}D u_n X \]
\vspace{-20pt}
\[ \Delta = 2^{2k-4}D^2(-c)^ny^{4p}, \qquad N = 2^\alpha \rad(c \cdot 2D \cdot y ) ,  \qquad \alpha =  \begin{cases} 4 &:  \ k = 5 \\ 2 &:  \ k = 6,7 \end{cases} \]

\item $k\geq 8$, $2 \notdiv y$, $w_n = 2 \hat{w}_n$, with $\hat{w}_n \equiv 1 \pmod{4}$
\vspace{-5pt}
\[ E_{9} : Y^2 + XY = X^3 + \frac{\hat{w}_n-1}{4} X^2 + 2^{k-8}D u_n X \]
\vspace{-20pt}
\[ \Delta = 2^{2k-16}D^2(-c)^ny^{4p}, \qquad N = 2^\alpha \rad(c \cdot 2D \cdot y) ,  \qquad \alpha =  \begin{cases} -1&: \ k = 8 \\ 0 &: \ k > 8 \end{cases} \]

\end{enumerate}
\end{lem}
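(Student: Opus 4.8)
The plan is to recognize \eqref{rel_diophan} as a primitive solution to a ternary equation of signature $(p,p,2)$ and then invoke the Frey-curve machinery of Bennett and Skinner \cite{bennett04}; the content of the lemma reduces to three steps: (i) producing an odd-primitive solution, (ii) writing down the basic Frey curve and computing its odd conductor, and (iii) the $2$-adic analysis that splits into the nine listed cases. First I would reformulate: rewriting \eqref{gen_diophan} as \eqref{rel_diophan}, the triple $(X,Y,Z)=(y^2,1,v_n)$ solves
\[ (b^2+4c)X^p + 4(-c)^nY^p = Z^2 . \]
Before applying the machinery I must check that $\gcd\bigl((b^2+4c)y^{2p},\,4(-c)^n,\,v_n^2\bigr)$ is a power of $2$. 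This is a short induction from $\gcd(b,c)=1$: modulo an odd prime $q\mid c$ the recurrences give $u_n\equiv b^{n-1}$ and $v_n\equiv b^n$, so $q\nmid u_n v_n$ and hence $\gcd(y,c)=\gcd(v_n,c)=1$; modulo an odd prime $q\mid(b^2+4c)$ one has a double root $\alpha\equiv\beta\equiv b/2$ with $q\nmid b$, so $v_n\equiv 2(b/2)^n\not\equiv0$. Thus $v_n$ is coprime to the odd part of $c(b^2+4c)$, which (with $n\ge7$, $p\ge5$ making the exponents large) yields an odd-primitive solution.

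Next the basic Frey curve. Up to the sign normalization $w_n=\pm v_n$, I take
\[ E:\quad Y^2 = X^3 + w_nX^2 + (-c)^nX = X\bigl(X^2+w_nX+(-c)^n\bigr), \]
whose $2$-torsion records the factorization and whose invariants are
\[ c_4 = 16\bigl(w_n^2-3(-c)^n\bigr), \qquad \Delta = 2^4(-c)^{2n}\bigl(w_n^2-4(-c)^n\bigr) = 2^4(-c)^{2n}(b^2+4c)y^{2p}, \]
the last equality by \eqref{gen_diophan}. Using the coprimality of $v_n$ to $c(b^2+4c)$ established above, at every odd prime $q\mid\Delta$ one computes $w_n^2\equiv4(-c)^n\pmod q$, whence $c_4\equiv16(-c)^n\not\equiv0$; so the reduction is multiplicative at each odd bad prime and good away from $2c(b^2+4c)y$. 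Since multiplicative reduction contributes conductor exponent $1$ regardless of multiplicity, the $p$-th power $y^{2p}$ is invisible to the conductor, pinning down the odd part of $N$ as $\rad\bigl(c\,(b^2+4c)\,y\bigr)$. The models $E_2,E_3,E_9$ are this same curve after a rational $2$-isogeny and completion of the square, used only to improve the model at $2$.

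The crux, and the main obstacle, is the $2$-adic analysis producing the nine cases and the exact exponent $2^\alpha$. I would organize by $k=\ord_2(b^2+4c)$: since $\gcd(b,c)=1$, $b$ and $c$ are not both even, so if $b$ is odd then $k=0$, while if $b$ is even then $c$ is odd and $b^2+4c=4\bigl((b/2)^2+c\bigr)$ forces $k\ge2$; in particular $k=1$ never occurs, matching the absence of that case. For each value or range of $k$ — $k=0$ (cases $1$–$3$), $k=2$ ($4,5$), $k=3$ ($6$), $k=4$ ($7$), $k\in\{5,6,7\}$ ($8$), $k\ge8$ ($9$) — together with the $2$-adic behaviour of $v_n$ and $y$ (for $k=0$ one checks $v_n\equiv y\pmod2$, which separates cases $1$ and $2$), I would run Tate's algorithm at $2$ on the appropriate minimal model. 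The sign choice for $w_n$ and the substitutions defining $E_2,E_3,E_9$ are exactly the standard moves to reach minimality and to read off $\alpha$.

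Rather than redo each Tate computation from scratch, the efficient route is to verify that the specialization $A=b^2+4c$, $B=4(-c)^n$, $C=1$ lands on the corresponding line of the conductor formulae of \cite{bennett04}, so that the minimal discriminants and conductors quoted for $E_1$–$E_9$ follow from their general tables. The step requiring the most care is confirming that the stated congruence conditions are exhaustive ``without loss of generality'' — that is, that every odd-primitive solution, after the admissible twist $v_n\mapsto-v_n$, satisfies exactly one of the nine sets of conditions — which is the bookkeeping that I expect to be the genuine difficulty.
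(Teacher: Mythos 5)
Your proposal is correct and follows essentially the same route as the paper: the paper's proof consists precisely of checking primitivity away from $2$ (by the same induction from $\gcd(b,c)=1$ you sketch) and then reading off the nine cases from the tables of Bennett and Skinner \cite{bennett04} via an explicit parameter dictionary, which is your ``efficient route.'' The extra detail you supply (the $c_4$/$\Delta$ computation at odd primes, the observation that $k=1$ cannot occur) is consistent with and implicit in that reduction, though note that case 2 of the lemma also covers $k\geq 2$, not only $k=0$ as your case organization suggests.
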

\begin{proof}
We use the formulas of \cite{bennett04}.  For the ease of the reader, the relevant parameters of that paper are 
\begin{center} 
\begin{tabular}{c | c | c | c | c | c | c | c}
Case & \cite{bennett04} Case & $A$ & $B$ & $C$ & $a$ & $b$ & $c$ \\ \hline \hline
1 & (iii) & $b^2+4c$ & $4(-c)^n$ & 1 & $y^2$ & 1 & $w_n$ \\ \hline
2 & (v) & $(-c)^n$ & $(b^2+4c)2^{2p-2}$ & 1 & 1 & $(y/2)^2$ & $w_n/2$ \\ \hline
3 & (v) & $(b^2+4c)$ & $4(-c)^n$ & 1 & $y^2$ & $1$ & $w_n$ \\ \hline
4 & (i) & $(-c)^n$ & $D$ & 1 & $1$ & $y^2$ & $w_n/2$ \\ \hline
5 & (i) & $D$ & $(-c)^n$ & 1 & $y^2$ & 1 & $w_n/2$ \\ \hline
6 & (ii) & $(-c)^n$ & $2D$ & 1 & 1 & $y^2$ & $w_n/2$ \\ \hline
7 & (iii) & $(-c)^n$ & $2^2D$ & 1 & 1 & $y^2$ & $w_n/2$ \\ \hline
8 & (iv) & $(-c)^n$ & $2^{k-2}D$ & 1 & 1 & $y^2$ & $w_n/2$ \\ \hline
9 & (v) & $(-c)^n$ & $2^{k-2}D$ & 1 & 1 & $y^2$ & $w_n/2$ \\ \hline
\end{tabular}
\end{center}
Note that if $c \neq 1$, then $c \notdiv y$ by induction on the recurrence relation in a similar fashion to Lemma \ref{relprime}; thus the only common factors we need to account for are powers of $2$.
\end{proof}

\begin{rem}\label{not_rel_prime}
In the case where $b$ and $c$ are not relatively prime, let $A \mid b,c$ be the largest common factor of $b,c$.  Then the same analysis can be carried out to find that
\[ N_E = 2^{\gamma} \cdot \rad'(A')^2\cdot\rad'(cy(b^2+4c)/A^2) \qquad \qquad \gamma \leq 8, \]
where $A'$ is the squarefree part of $A$.  This is certainly bounded by  
\[ 2^8 \cdot {\rad}'(c) \cdot {\rad}'((b^2+4c)\cdot y). \]
\end{rem}

In order to derive a contradiction or nontrivial local data, we first prove that the mod $p$ representation is unramified at any odd prime not dividing $C$ in $\Delta = C \cdot D^p$; and that it absolutely irreducible at $p \geq 7$, and in some cases $p=5$.  In this way, we can apply Theorem~\ref{levellow} and level-lower to a level independent of the solution.

\begin{lem}\label{unram}
The mod $p$ Galois representation $\rho_{E,p}$ associated to one of the above Frey curves is unramified at any odd prime $\ell$ not dividing $c \cdot (b^2+4c)$.  Further it is finite flat at $p$ if $p \notdiv 2c\cdot (b^2+4c)$.
\end{lem}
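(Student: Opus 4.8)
The key observation is that all nine Frey curves from Lemma \ref{freycurves} have minimal discriminant of the form $\Delta = C \cdot (\text{stuff})^p$, where the exponent-$p$ part is precisely the $y^{2p}$ or $y^{4p}$ factor, and $C$ is a product of powers of $2$, $(-c)^n$, $(b^2+4c)$, and $D$. Crucially, the prime-to-$2$ part of $C$ involves only primes dividing $c \cdot (b^2+4c)$. So for an odd prime $\ell \nmid c\cdot(b^2+4c)$, the only way $\ell$ can divide $\Delta$ is through the $y$-factor, where it appears to a multiple of $p$.

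**Plan for unramifiedness.** The standard tool here is the criterion (see \cite{bennett04}, or the discussion in \cite{siksek06}) that for a semistable prime $\ell$ of an elliptic curve $E/\Q$ with $\ell \nmid p$, the representation $\rho_{E,p}$ is unramified at $\ell$ if and only if $p \mid \ord_\ell(\Delta_E)$, i.e. $\ell$ divides the minimal discriminant to a multiple of $p$. First I would verify that $E$ has multiplicative (hence semistable) reduction at each such $\ell$. This follows from inspecting the conductor formula in each case of Lemma \ref{freycurves}: the conductor $N$ is squarefree away from $2$ and the primes dividing $c \cdot (b^2+4c)$, because the radical $\rad(\cdots)$ contributes each odd prime exactly once. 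Thus at any odd $\ell \nmid 2c\cdot(b^2+4c)$ dividing $N$, the reduction is multiplicative. Second, for such $\ell$ one has $\ord_\ell(\Delta) = \ord_\ell(y^{2p}) = 2p\,\ord_\ell(y)$ (or $4p\,\ord_\ell(y)$), which is divisible by $p$. Applying the semistability criterion then gives that $\rho_{E,p}$ is unramified at $\ell$.

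**The finite-flat claim at $p$.** For the prime $p$ itself, when $p \nmid 2c\cdot(b^2+4c)$, the same reasoning shows $E$ has either good reduction at $p$ (if $p \nmid y$) or multiplicative reduction with $p \mid \ord_p(\Delta)$. In the good-reduction case $\rho_{E,p}|_{G_{\Q_p}}$ is finite flat because it comes from a finite flat group scheme (the $p$-torsion of the Néron model). In the multiplicative case, the condition $p \mid \ord_p(\Delta)$ is exactly the criterion guaranteeing that the representation is finite flat at $p$ (this is the standard input to Ribet level-lowering, as used identically in \cite{siksek06} and \cite{bennett04}). So I would split into these two subcases and cite the appropriate local criterion in each.

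**Main obstacle.** The routine verification is spread across nine cases, so the bookkeeping is the real labor: I must confirm case-by-case that every odd prime $\ell \nmid 2c\cdot(b^2+4c)$ appears in $\Delta$ only via the $y$-factor (never via $(-c)^n$, $(b^2+4c)$, $D$, or a stray constant), and that the given equation is minimal at such $\ell$ so that $\ord_\ell(\Delta)$ really is $2p\,\ord_\ell(y)$ or $4p\,\ord_\ell(y)$. The potential subtlety is whether any of the Frey models fail to be minimal at some such $\ell$ — but since $\ell$ is odd and prime to $c\cdot(b^2+4c)$, one checks via the standard $c_4,\Delta$ test that the model is already minimal there, so no hidden correction arises. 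Once minimality is confirmed uniformly, the semistability-plus-divisibility criterion dispatches all nine cases at once, and the finite-flat statement at $p$ follows from the same divisibility together with the good-reduction case.
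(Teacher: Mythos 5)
Your proposal is correct and follows essentially the same route as the paper: good reduction (N\'eron--Ogg--Shafarevich) away from $N$, and at the multiplicative primes dividing $y$ the Tate-curve criterion that $p \mid \upnu_\ell(\Delta_{\min})$ forces $\rho_{E,p}$ to be unramified at $\ell$ (and finite flat at $\ell = p$). The only cosmetic difference is that the paper derives this criterion explicitly from the Tate parametrization $E[p](\Q_\ell^{ur}) = \langle \zeta_p, q^{1/p}\rangle$ rather than quoting it, and it does not belabor the case-by-case minimality check since the discriminants in Lemma \ref{freycurves} are already the minimal ones.
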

\begin{proof}
By the N\'{e}ron-Ogg-Shafarevich criterion, the mod $p$ Galois representation is unramified outside $pN$.  By the theory of Tate curves, $E({\Q}_\ell^{ur}) \simeq {\Q}_\ell^{ur}{}^\ast / q^{\Z}$ with $E[p]({\Q}_\ell^{ur}) = \langle \zeta_p, q^{1/p} \rangle$, so in particular the ramification comes from primes $\ell$ ramified in $\Q(\zeta_p)$ and those such that $p \notdiv \upnu_\ell(q) = \upnu_\ell(\Delta)$.  If $\ell \mid y$, then as $p \mid \upnu_\ell(\Delta)$, $\rho_{E,p}$ is unramified at $\ell$.  Further as $\Q(\zeta_p) \subset \Q(E[p])$, $\rho_{E,p}$ will be ramified at $p$; however, if $p \mid N$, but $p \notdiv 2c \cdot (b^2+4c)$, then $p \mid y$, and the above argument shows that $\Q(E[p]) / \Q(\zeta_p)$ is unramified at $p$.
\end{proof}

\begin{lem}\label{absirr}
Let $E$ be an elliptic curve with a rational $2$-torsion point.   
\begin{enumerate}[(i)]
\item If there exists a prime of multiplicative reduction for $E$, then $\rho_{E,p}$ is absolutely irreducible for $p \geq 7$.
\item Further if $\Delta_E$ is a square, then $\rho_{E,5}$ is absolutely irreducible.
\end{enumerate}
\end{lem}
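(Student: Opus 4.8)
The plan is to use the classification of subgroups of $\GL_2(\F_p)$: a two-dimensional representation over $\F_p$ fails to be absolutely irreducible exactly when its image lies either in a Borel subgroup (the reducible case) or in the normalizer of a Cartan subgroup (the irreducible-but-dihedral case). So I would establish absolute irreducibility by ruling out each possibility separately. Throughout, the rational $2$-torsion point provides a $G_\Q$-stable line in $E[2]$, i.e.\ a rational $2$-isogeny $E\to E'$, and I would fix a prime $\ell\neq p$ of multiplicative reduction. By the theory of Tate curves used in Lemma~\ref{unram}, if moreover $p\nmid v_\ell(\Delta_E)$, then $\rho_{E,p}(I_\ell)$ is a nontrivial unipotent subgroup of order $p$: the stable line is $\mu_p$, on which $I_\ell$ acts trivially, while $q^{1/p}$ generates a ramified extension. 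This unipotent image is the main local tool.

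For the reducible case, if $\rho_{E,p}$ is reducible then $E$ admits a rational $p$-isogeny; combined with the rational $2$-isogeny and $\gcd(2,p)=1$, this yields a rational cyclic $2p$-isogeny, hence a noncuspidal rational point on $X_0(2p)$. For $p\geq 7$, the work of Mazur and Kenku on rational points of $X_0(N)$ shows that every such noncuspidal point corresponds to an elliptic curve with complex multiplication, and such curves have potential good reduction at every prime, contradicting the existence of a prime of multiplicative reduction. Note that $p=5$ is genuinely excluded at this step, since $X_0(10)$ has genus $0$ and carries infinitely many noncuspidal rational points; this is precisely the gap that part (ii) must close.

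For the remaining dihedral case, the image lies in the normalizer of a Cartan subgroup, a group of order prime to $p$, and so contains no element of order $p$. This contradicts the nontrivial unipotent image of $I_\ell$ produced above, completing part (i). The one point requiring care is that the chosen multiplicative prime satisfies $p\nmid v_\ell(\Delta_E)$: for the Frey curves of Lemma~\ref{freycurves} this holds at the odd primes dividing $c$ or $b^2+4c$ but not $y$, since there $v_\ell(\Delta_E)$ is bounded independently of $p$.

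For part (ii), squareness of $\Delta_E$ upgrades the $2$-torsion: writing $E\colon y^2=(x-e)(x^2+rx+s)$ with $e\in\Q$, a square discriminant forces the discriminant of $x^2+rx+s$ to be a square, so all three $2$-torsion points are rational and $E[2]\subset E(\Q)$. The dihedral case at $p=5$ is excluded exactly as in part (i), since the normalizer of a Cartan in $\GL_2(\F_5)$ has order prime to $5$. For the reducible case, a rational $5$-isogeny together with full rational $2$-torsion produces a noncuspidal rational point on the modular curve of level $\Gamma(2)\cap\Gamma_0(5)$, which has positive genus; I would then check, via Mazur--Kenku or a short direct computation, that its noncuspidal rational points are all CM, again contradicting multiplicative reduction. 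The main obstacle throughout is precisely this modular-curve input: verifying that the noncuspidal rational points of $X_0(2p)$ for $p\geq 7$, and of the $\Gamma(2)\cap\Gamma_0(5)$ curve at $p=5$, are exhausted by CM curves. For the genus $\geq 2$ levels this rests on the full Mazur--Kenku determination of $X_0(N)(\Q)$, and pinning down the $p=5$ curve is where I expect the real effort to lie.
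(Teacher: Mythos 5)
Your treatment of part (i) in the reducible case is essentially the paper's: a rational $p$-isogeny together with the rational $2$-torsion point gives a noncuspidal rational point on $X_0(2p)$, and for $p\geq 7$ the Mazur--Kenku determination of $X_0(2p)(\Q)$ leaves only CM points, which are excluded by the prime of multiplicative reduction (the paper invokes Mazur's Corollary 4.4 directly for $p=11$ and $p\geq 17$ and uses $X_0(2p)$ only for $p=7,13$, but the mechanism is the same). The genuine problem is your second case. First, the dichotomy is stated incorrectly: a representation whose image lies in the normalizer of a non-split Cartan but not in the Cartan itself \emph{is} absolutely irreducible; the case to exclude is image inside a non-split Cartan. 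More seriously, your exclusion of that case needs an element of order $p$ in the image, which you extract from inertia at a multiplicative prime $\ell$ with $p\nmid \ord_\ell(\Delta_E)$ --- a hypothesis that is not in the statement of the lemma, and which your proposed verification does not actually supply for the Frey curves: for $\ell\mid c$ (the only odd multiplicative primes available when $b^2+4c=1$, which is the case in which the lemma is applied) one has $\ord_\ell(\Delta_E)=2n\cdot\ord_\ell(c)$, which is divisible by $p$ whenever $p\mid n$. The paper closes this case in one line with no local input: $\rho_{E,p}$ is odd, and an odd irreducible two-dimensional representation over $\F_p$, $p>2$, is automatically absolutely irreducible because complex conjugation has the distinct eigenvalues $\pm 1$ in $\F_p$. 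You should replace your Cartan step with this observation.

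For part (ii) your reduction is correct --- a square discriminant together with one rational $2$-torsion point does force $E[2]\subset E(\Q)$ --- but the proof is not actually completed: everything rests on determining the rational points of the genus-one modular curve attached to $\Gamma(2)\cap\Gamma_0(5)$, which you explicitly defer as ``where the real effort lies.'' Mazur--Kenku does not apply to this group (it is not of the form $\Gamma_0(N)$), so a direct Mordell--Weil computation is unavoidable, and that computation is the entire content of part (ii). The paper carries it out on a smaller cover: using only the squareness of $\Delta_E$ (not the $2$-torsion point), a reducible $\rho_{E,5}$ gives a rational point on the normalization $X$ of $X_\Delta\times_{X(1)}X_0(5)$, where $X_\Delta\colon z^2 = j-12^3$ is the quadratic cover of the $j$-line, and $X$ is the rank-zero elliptic curve $y^2=x^3+22x^2+125x$ whose only two rational points map to the cusp. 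In particular there are no noncuspidal rational points at all, so no CM or multiplicative-reduction argument is needed in part (ii), contrary to your expectation; and note that your dihedral exclusion at $p=5$ would again require the order-$5$ inertia element whose existence is not guaranteed, so the oddness argument is needed here as well.
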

\begin{proof}
A reducible mod $p$ representation implies that $E$ has a $\Q$-rational $p$ isogeny.  And as $\rho_{E,p}$ is odd ($\det \colon G_\Q \rightarrow \F_p^*$ is the cyclotomic character $\chi_p$, and $\chi_p(c)=-1$ for $c$ complex conjugation), irreducible implies absolutely irreducible.  
Now, first take $p =11$, or \mbox{$p \geq 17$}.  Then by \cite{mazur78} Cor 4.4, $\rho_{E,p}$ is irreducible.  Further, as $E$ has a rational 2-torsion point, it corresponds to a rational point on the modular curve $X_0(2p)$ parameterizing elliptic curves with cyclic $2p$-isogenies.  But there are no noncuspidal, non-CM rational points for $p = 7,13$.

Now assume that $\Delta_E$ is a rational square.  If $\rho_{E,5}$ were reducible, it would give rise to a rational point on $X_0(5) \simeq \P_1$.  Let $X_{\Delta} \simeq \P_1$ be the degree $2$ cover of $X(1)$ parameterizing an elliptic curve $E'$ and a choice of square root of its discriminant $\Delta_{E'}$.  The map to $X(1)$ is given by
\[z \mapsto z^2 + 12^3 = j(E'). \]
As $\Delta_E$ is a rational square, $E$ gives rise to a rational point on $X_{\Delta}$.
As in \cite{brown12} we construct the following diagram
\begin{center}
\begin{tikzpicture}[scale=2]
\matrix (m) [matrix of math nodes, row sep=3em, column sep=3em]
{ X(10) & & X(5)  \\
 & X & X_0(5)  \\
X(2) & X_{\Delta} & X(1)\\};
\path[->,font=\scriptsize,>=angle 90]
(m-1-1) edge  (m-3-1)
(m-1-1) edge (m-1-3)
(m-1-3) edge  (m-2-3)
(m-1-1) edge[dashed]  (m-2-2)
(m-2-3) edge  (m-3-3)
(m-2-2) edge  (m-2-3)
(m-2-2) edge  (m-3-2)
(m-3-1) edge  (m-3-2)
(m-3-2) edge  (m-3-3);
\end{tikzpicture}
\end{center}
where $X$ is the normalization of the fiber product $X_{\Delta} \times_{X(1)} X_0(5)$, and is an elliptic curve given by the equation
\[X \colon \qquad  y^2 = x^3 + 22x^2 +125x .\]
The map to $X_{\Delta}$ is given by
\[ (x,y) \mapsto \frac{y(x^2-500x -15625)}{x^3}. \]
We have that $X$ is rank $0$, with rational points $(0:0:1)$ and $(0:1:0)$, both mapped to the cusp at $\infty$ under the map to $X_{\Delta}$. Thus our elliptic curve cannot give rise to a rational point on $X_0(5)$, and we conclude that $\rho_{E,5}$ is irreducible and thus absolutely irreducible. 
\end{proof}

\subsection{Proof of Theorem \ref{explicit_eg_thm}}

In this section we use the techniques developed in the previous section to give a proof of our unconditional explicit theorem:

\begin{thm}\label{explicit_eg_thm_inplace}
For the following values of $b$ and $c$:
\begin{equation}\label{examples} (b,c) = (3,-2), (5,-6), (7,-12), (17,-72), (9,-20) \end{equation}
the Lucas sequence $u_n$ has no nontrivial $p$th powers, except $u_2 = 3^2$ in $(9,-20)$.
\end{thm}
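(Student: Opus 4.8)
The plan is to run the modular method assembled in Lemmas~\ref{freycurves}--\ref{absirr}, reducing each of the five sequences to a level carrying no weight~$2$ newforms. First I would dispose of the small exponents and small indices. By Lemma~\ref{smallp} there are no nontrivial squares or cubes apart from the exceptional $u_2 = 9$ in $(9,-20)$, so I may assume $p \geq 5$. To invoke Lemma~\ref{freycurves} I also need $n \geq 7$; for $1 \leq n \leq 6$ the terms $u_n$ are explicit small integers (for instance $u_n = 2^n-1$ when $(b,c) = (3,-2)$), and a direct check shows none of them is a nontrivial $p$th power for $p \geq 5$. Thus it suffices to rule out a solution $u_n = y^p$ with $p \geq 5$ and $n \geq 7$.

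Next I would attach a Frey curve. Each example has $b^2 + 4c = 1$, so $c$ is even and, by Lemma~\ref{relprime}, $u_n$ and $v_n$ are odd and coprime to $c$; in particular $y$ is odd and $\gcd(y,c)=1$, so the relation \eqref{rel_diophan}, which here reads $y^{2p} + 4(-c)^n = v_n^2$, is a primitive solution. Since $b^2 + 4c \equiv 1 \pmod 4$, $2 \mid c$, and $y, v_n$ are odd, after choosing the sign $w_n = \pm v_n \equiv 1 \pmod 4$ I land in Case~3 of Lemma~\ref{freycurves}. The resulting curve $E_3$ has a rational $2$-torsion point, conductor $N = \rad(c\cdot(b^2+4c)\cdot y) = \rad(cy)$, and discriminant $\Delta = 2^{-8}(-c)^{2n}y^{2p}$, which is a perfect square. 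Because $\Delta$ is a square, Lemma~\ref{absirr}(ii) gives absolute irreducibility of $\rho_{E,5}$; and since any prime $\ell \mid y$ is a prime of multiplicative reduction, Lemma~\ref{absirr}(i) gives absolute irreducibility for $p \geq 7$. So $\rho_{E,p}$ is absolutely irreducible for every $p \geq 5$.

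Now I would level-lower. By Lemma~\ref{unram}, $\rho_{E,p}$ is unramified at every odd prime $\ell \nmid c$ and finite flat at $p$ when $p \nmid 2c$. Every prime $\ell \mid y$ satisfies $\ell \nmid c$ (as $\gcd(y,c)=1$), and if $\ell = p$ then $p \nmid 2c$ since $p \geq 5$ is odd and $p \nmid c$; hence I can strip every prime dividing $y$ from the level. Combining Modularity (Theorem~\ref{modularity}) with Level Lowering (Theorem~\ref{levellow}) produces a weight~$2$ newform $g$ of level $N_0 = 2\cdot\rad'(c)$, the part of the conductor supported on $2$ and the odd primes dividing $c$ (the power of $2$ stays $2^1$ because $y$ is odd). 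Evaluating $\rad'(c)$ on the five examples gives $N_0 = 2$ for $(3,-2)$; $N_0 = 6$ for $(5,-6)$, $(7,-12)$, $(17,-72)$; and $N_0 = 10$ for $(9,-20)$. In every case $X_0(N_0)$ has genus $0$, so $S_2(\Gamma_0(N_0)) = 0$ and no such $g$ exists---a contradiction. This completes the argument, the sole surviving solution being the square $u_2 = 3^2$ in $(9,-20)$ already recorded in Lemma~\ref{smallp}.

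The steps I expect to be delicate, rather than hard, are the bookkeeping at the prime $2$ and the treatment of $(9,-20)$ at $p=5$. One must confirm that Case~3 of Lemma~\ref{freycurves} genuinely applies---that the sign of $w_n$ can be arranged and that the $2$-adic valuation of the conductor is exactly $1$---so that no spurious factor of $2$ inflates $N_0$ past a genus-$0$ level. The $(9,-20)$ sequence is the only one in which a relevant prime $p = 5$ can divide $c$; there I would note that $\gcd(y,c)=1$ forces $5 \nmid y$, so the finite-flat hypothesis of Theorem~\ref{levellow} is never invoked at $5$, and the level-lowering goes through to $N_0 = 10$ as above.
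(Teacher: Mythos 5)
Your proof is correct and follows essentially the same route as the paper's: reduce to $p \geq 5$ via Lemma~\ref{smallp}, attach the Case-3 Frey curve of Lemma~\ref{freycurves}, establish absolute irreducibility via Lemma~\ref{absirr} (your square-discriminant argument for $p=5$ is exactly the paper's Lemma~\ref{frey5irr}), and level-lower to $\rad(c) \in \{2,6,10\}$, where the space of weight-$2$ forms vanishes. If anything, your bookkeeping is slightly more careful than the paper's, in checking $n \leq 6$ rather than $n \leq 4$ so as to match the hypothesis of Lemma~\ref{freycurves}, and in spelling out why the $p=5$, $(9,-20)$ case causes no trouble.
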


\noindent
The following lemma will be useful for proving this theorem in the case $p=5$.

\begin{lem}\label{frey5irr}
Let $E$ be a Frey curve arising from a solution $u_n = y^5$ for $b^2+4c = 1$ and $n\geq 5$, then $\rho_{E,5}$ is absolutely irreducible.
\end{lem}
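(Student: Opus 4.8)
The plan is to invoke Lemma~\ref{absirr}(ii), which asserts that $\rho_{E,5}$ is absolutely irreducible whenever $E$ has a rational $2$-torsion point and $\Delta_E$ is a perfect square. So the task reduces to verifying that both hypotheses hold for the Frey curve attached to a solution $u_n=y^5$ with $b^2+4c=1$. The rational $2$-torsion point is automatic: every Frey curve in Lemma~\ref{freycurves} is written in a form with a rational point of order $2$ (the models are either $Y^2=X^3+w_nX^2+(\cdots)X$ or a translate thereof, which always has $(0,0)$ as a $2$-torsion point). The real content is checking that the discriminant is a square.

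First I would pin down which case of Lemma~\ref{freycurves} actually applies. Since $b^2+4c=1\equiv 1\pmod 4$, we have $k=\ord_2(b^2+4c)=0$, so the relevant Frey curves are those from Cases~1, 2, and~3, depending on the parity of $y$, $w_n$, and $c$. I would then read off the discriminant in each applicable case and examine its prime factorization. For instance, in Case~1 one has $\Delta=2^4(-c)^{2n}(b^2+4c)y^{2p}$; substituting $b^2+4c=1$ and $p=5$ gives $\Delta=2^4(-c)^{2n}y^{10}$, which is manifestly a square ($2^4$, $(-c)^{2n}$, and $y^{10}$ are each squares). I expect the analogous substitution in Cases~2 and~3 to produce a square as well, precisely because the factor $(b^2+4c)$ that would otherwise obstruct squareness collapses to $1$, and every surviving factor carries an even exponent once $p=5$ forces $y^{2p}=y^{10}$ or $y^{4p}=y^{20}$.

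The key observation driving the argument is therefore the interplay between the condition $b^2+4c=1$ and the exponent $p=5$: the only factor in the generic discriminant formula with an \emph{a priori} odd exponent is $b^2+4c$ itself (appearing to the first power in Cases~1,~3 and to an even power in Case~2), and setting $b^2+4c=1$ removes it entirely. Everything else appears either with an even power of $2$, as $(-c)^{2n}$ or $(-c)^n$ — and here I would use that $b^2+4c=1$ forces $c$ to be even and, via Lemma~\ref{relprime}, controls the relevant $2$-adic valuations — or as an even power of $y$. I would organize this as a short case check over the three possible Frey curves, verifying squareness of $\Delta$ in each.

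The main obstacle I anticipate is the factor $(-c)^n$ in the cases where it appears to the first power (Cases~2 and~3), since $n$ need not be even. Here squareness is \emph{not} immediate from the exponent on $y$ alone, and I would need to argue that the remaining part of $\Delta$ is nonetheless a square — likely by tracking the $2$-adic valuation carefully (using that $c$ is even when $b^2+4c=1$, so $(-c)^n$ contributes $\ord_2(c)\cdot n$ to the valuation) together with the structural fact from \cite{bennett04} that these Bennett--Skinner models have discriminant of the shape $C\cdot D^p$ with $p$ odd, so that the non-$p$th-power part $C$ is what must be checked for squareness. Reconciling the congruence hypotheses ($w_n\equiv\pm\cdots\pmod 4$) that select each case with the squareness computation is the step I would expect to require the most care, and I would handle it by dispatching the cases individually rather than seeking a uniform formula.
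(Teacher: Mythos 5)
Your overall strategy is exactly the paper's: reduce to Lemma~\ref{absirr}(ii) by checking that the Frey curve has a rational $2$-torsion point and square discriminant. But the case analysis, which you correctly identify as the real content, is left unresolved in a way that matters, and the fix you propose for the ``main obstacle'' would not actually work. The point is that when $b^2+4c=1$ only Case~3 of Lemma~\ref{freycurves} can occur. Indeed, $b^2+4c=1$ forces $b$ odd, hence $b^2\equiv 1\pmod 8$ and $c=(1-b^2)/4$ is \emph{even}; this kills Case~1, whose hypothesis includes $2\nmid c$. Moreover Lemma~\ref{relprime} gives that $u_n$ and $v_n$ are prime to $c$, hence odd, so $2\nmid y$ and $2\nmid w_n$; this kills Case~2, whose hypothesis includes $2\mid y,w_n$. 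What remains is Case~3, where
\[
\Delta_E=2^{-8}(-c)^{2n}(b^2+4c)\,y^{2p}=2^{-8}(-c)^{2n}y^{10},
\]
which is manifestly a square since $(-c)$ appears to the power $2n$ (you misread this: Case~3 has $(-c)^{2n}$, not $(-c)^n$; only Case~2 carries the odd exponent $n$). This is precisely the paper's proof, which first invokes Lemma~\ref{relprime} to land in Case~3 and then reads off the square discriminant.

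The genuine gap is your proposed treatment of Case~2. If Case~2 could occur, its discriminant $2^{-16}(b^2+4c)^2(-c)^n y^{4p}=2^{-16}(-c)^n y^{20}$ with $n$ odd and $-c$ not a perfect square (e.g.\ $c=-6$, so $(-c)^n=6^n$) would simply \emph{not} be a square, and no amount of $2$-adic bookkeeping or appeal to the $C\cdot D^p$ shape of the discriminant repairs that: the obstruction lives at the odd primes dividing $c$. The argument succeeds only because that case is vacuous under the hypothesis $b^2+4c=1$, and you need to say so explicitly rather than promise a valuation computation that cannot close the case. You already have both ingredients in hand ($c$ even, and Lemma~\ref{relprime}); you just never assemble them into the statement ``only Case~3 applies.''
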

\begin{proof}
If there exists a solution $u_n = y^5$ for $u_n$ such a sequence, then by Lemma \ref{relprime} there exists a primitive solution to 
\[ X^5 + 4(-c)^n Y^5 = (b^2 + 4c) X^5 + 4(-c)^n Y^5 = Z^2. \]
To this we associate the Frey curve of case 3 in Lemma \ref{freycurves}
\[E: Y^2 + XY = X^3 + \frac{w_n - 1}{4} X^2 + 2^{-4}(-c)^nX \]
The Frey curve $E$ has conductor and minimal discriminant
\[ N_E =\rad(c \cdot y)  \qquad \qquad \Delta_E = 2^{-8} \cdot (-c)^{2n} \cdot y^{10}. \]
As $\Delta_E$ is a square, Lemma \ref{absirr} implies $\rho_{E,5}$ is absolutely irreducible.
\end{proof}

\begin{proof}[Proof of Theorem \ref{explicit_eg_thm}]
Posit a solution $u_n = y^p$ for $n \geq 5$ and $p \geq 5$.  To this solution, we associate a Frey curve $E$, as in Lemma \ref{freycurves}.  The mod $p$ Galois representation $\rho_{E,p}$ is unramified outside $p\cdot \rad{(c)}$ and finite flat at $p$  by Lemma \ref{unram} (except at $p=5$ in $(b,c) = (9,-20)$ when $p \mid c$).   By Lemma \ref{absirr} and \ref{frey5irr}, $\rho_{E,p}$ is absolutely irreducible for $p\geq 5$.  By the modularity of elliptic curves \ref{modularity} and Ribet's level-lowering Theorem \ref{levellow}, the representation $\rho_{E,p}$ is isomorphic to one arising from a modular form of level $N = \rad(c)$.  For $b,c$ as above, there are no modular forms of level $\rad(c)=2,6,10$.  So there are no perfect $p$th powers if $p \geq 5$ and $n \geq 5$.

Invoking Lemma \ref{smallp} for the cases $p=2,3$ (and checking $n \leq 4$) completes the proof of the theorem.
\end{proof}

In addition, we have the following interesting proposition that these are the \emph{only} such sequences.

\begin{prop}\label{onlyseqs}
The sequences in Theorem \ref{explicit_eg_thm} are the only Lucas sequences with $b^2+4c = 1$ whose Frey curves descend to levels $N$ such that $\dim S_2(\Gamma_0(N))_{\text{new}} = 0$.
\end{prop}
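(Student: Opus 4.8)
The plan is to show that the condition $\dim S_2(\Gamma_0(N))_{\text{new}} = 0$ restricts $N$ to a finite explicit list, then work backwards through the Frey curve conductor formulas of Lemma \ref{freycurves} to determine which sequences $(b,c)$ with $b^2+4c=1$ can possibly give rise to such a level. First I would recall the classical fact (already invoked in the introduction) that the space of newforms $S_2(\Gamma_0(N))_{\text{new}}$ is empty precisely for a finite set of levels, the largest of which is $N = 60$; indeed the genus of $X_0(N)$, and hence the dimension of $S_2(\Gamma_0(N))$, grows so that $\dim S_2(\Gamma_0(N))_{\text{new}} = 0$ forces $N \leq 60$. I would cite the standard dimension formula for $S_2(\Gamma_0(N))$ to list the admissible $N$ explicitly.

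Next I would specialize the conductor computations to the case $b^2+4c=1$. In this regime, as noted after Lemma \ref{smallp}, we have $\alpha,\beta \in \Z$ with $\beta = \alpha - 1$, so $c$ is even and $b,c$ are coprime by Lemma \ref{relprime}; moreover $b^2+4c = 1$ is odd and a perfect square, so $\rad'(b^2+4c) = 1$. Feeding this into the conductor formula (the relevant case here is case 3 of Lemma \ref{freycurves}, where $2 \mid c$ and $b^2+4c \equiv 1 \pmod 4$), the level after level-lowering is $N = \rad(c)$. Thus the constraint $\dim S_2(\Gamma_0(\rad(c)))_{\text{new}} = 0$ becomes a condition purely on the radical of $c$: we need $\rad(c)$ to lie among the finitely many levels with empty newform space. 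Since $c$ is even, $2 \mid \rad(c)$, which cuts the list of admissible radicals down further.

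Having reduced to finitely many possible values of $\rad(c)$, the final step is to enumerate, for each admissible squarefree even $m = \rad(c)$, the integers $c$ with $\rad(c) = m$ and $c$ even, and to check which ones yield a genuine Lucas sequence with $b^2 + 4c = 1$ for some integer $b$. The equation $b^2 = 1 - 4c$ requires $1-4c$ to be a perfect square, which for each candidate $c$ is a direct finite check; solving for $b$ and matching against the radical condition picks out exactly the pairs $(b,c)$ listed in \eqref{examples}. The main obstacle I anticipate is not conceptual but bookkeeping: one must carefully confirm that for each admissible level no \emph{other} value of $c$ (with the same radical but higher prime powers, e.g. $c = -8$ versus $c = -2$) produces a valid sequence, and that nondegeneracy ($\alpha/\beta$ not a root of unity) and the hypotheses $n \geq 7$, $p \geq 5$ of Lemma \ref{freycurves} do not silently exclude a candidate; so the enumeration must be exhaustive rather than merely illustrative.
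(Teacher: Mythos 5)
Your first two steps match the paper: the level after level-lowering is $N=\rad(c)$ (even, since $b^2+4c=1$ forces $c$ even), and the even squarefree levels with $\dim S_2(\Gamma_0(N))_{\text{new}}=0$ are exactly $N=2,6,10,22$. The gap is in your final step. For a fixed admissible radical $m=\rad(c)$ there are \emph{infinitely} many integers $c$ with $\rad(c)=m$ (e.g.\ $c=-2,-4,-8,\dots$ all have radical $2$, and $c=-2^a3^b$ for all $a,b\geq 1$ have radical $6$), so ``enumerate the integers $c$ with $\rad(c)=m$ and check which yield a perfect square $1-4c$'' is not a direct finite check. What you dismiss as bookkeeping --- ruling out higher prime powers with the same radical --- is in fact the crux: it amounts to solving the exponential Diophantine equations $b^2=1+4\prod p_i^{e_i}$ with the primes $p_i$ fixed and the exponents $e_i$ unbounded, a generalized Ramanujan--Nagell problem whose finiteness is a theorem, not an inspection.

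The paper closes this gap by writing $-4c = A x^3$, where $A=4\prod p_i^{r_i}$ with $r_i\in\{0,1,2\}$ the residues of the exponents mod $3$, so that a solution of $b^2=1-4c$ with $\rad(c)=m$ gives an integral point on one of finitely many Mordell-type curves $Y^2=Ax^3+1$. Integral points on each such curve can be provably enumerated (the paper cites the curve 13068j1 for $A=968$, arising from $m=22$, and a short Sage computation for the rest), and matching the points against the radical condition recovers exactly the five sequences in \eqref{examples}. You would need to supply this reduction, or some equivalent effective finiteness argument, for your proof to go through.
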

\begin{proof}
The level descended to is $N = \rad(c)$ by Lemma \ref{freycurves}.  The only even, squarefree levels for which the corresponding space of newforms is trivial are $\rad(c) = N = 2,6,10,22$.  Any solution to $b^2+4c = 1$ with $\rad(c)$ in the above list corresponds to an integral point on an elliptic curve of the form
\[ E \colon Y^2 = Ax^3+1, \]
for finitely many possible $A$.  Enumerating the possible elliptic curves and their integral points easily completes the proof. (When $A = 968$, the curve in question is 13068j1 in the Cremona database; the others can be done in about 30 seconds with Sage, see \cite{code})
\end{proof}

\section{Proof of Theorem \ref{condbound}}\label{genthmproof}

In this section we give a proof of the following general theorem \textbf{conditional on the Frey-Mazur Conjecture}.  Let 
\[\psi(N) = N \cdot \prod_{p \mid N} \left( 1 + \frac{1}{p} \right) \]
be the Dedekind $\psi$ function.  Note that this is the index of $\Gamma_0(N)$ in the full modular group $\Gamma$. In addition 
\[ \dim S_2(\Gamma_0(N))_{\text{new}} \leq 1 + \frac{\psi(N)}{12}. \]
(For an exact formula see \cite{stein07}.)

\begin{thm}\label{condbound_inplace}
Assume the Frey-Mazur Conjecture.  Consider a solution (n,y,p) to \eqref{the_eqn}. Let
\[ N = 2^8  \cdot {\rad}'(c) \cdot\rad'(b^2+4c), \]
where $\rad'(m)$ denotes the product over odd primes dividing $m$. Then 
\[ p \leq \max\left\{17,   \psi(N)^{(\psi(N)/12 + 1)}, 4\log{|\alpha|} \cdot \max\{30,( N+1)\}  \right\}. \]
\end{thm}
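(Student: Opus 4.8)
The plan is to assume a solution $(n,y,p)$ exists with $p$ exceeding the stated bound and derive a contradiction via the modular machinery combined with the Frey--Mazur Conjecture. First I would dispose of small cases and reduce to the generic situation: by Lemma \ref{smallp}-type arguments and the hypothesis that $p$ is large (certainly $p \geq 17$), I may assume $p \geq 5$, $n \geq 7$, and that I am in one of the nine Frey-curve cases of Lemma \ref{freycurves} (with the modification of Remark \ref{not_rel_prime} when $\gcd(b,c) \neq 1$). To such a solution I attach the Frey curve $E$, whose conductor divides $N = 2^8 \cdot \rad'(c) \cdot \rad'(b^2+4c)$ by Lemma \ref{freycurves} and Remark \ref{not_rel_prime}. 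By Lemma \ref{unram} the representation $\rho_{E,p}$ is unramified outside $p \cdot c \cdot (b^2+4c)$ and finite flat at $p$ (for $p$ large enough not to divide $2c(b^2+4c)$, which holds since $p$ is huge), and by Lemma \ref{absirr} it is absolutely irreducible. Hence modularity (Theorem \ref{modularity}) and level-lowering (Theorem \ref{levellow}) produce a weight-$2$ newform $g$ of level dividing $N$ with $\rho_{E,p} \simeq \rho_{g,p}$.

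The next step is to convert the newform $g$ into a genuine elliptic curve so that Frey--Mazur applies. If $g$ is rational it corresponds to an elliptic curve $E'/\Q$ of conductor dividing $N$ with $\rho_{E,p} \simeq \rho_{E',p}$; the Frey--Mazur Conjecture then forces $E$ and $E'$ to be isogenous once $p$ exceeds an effective bound depending only on $N$. The crux is to control the irrational newforms: if $g$ has a nontrivial coefficient field, one must either eliminate this possibility or bound $p$ directly. I expect the bound $\psi(N)^{(\psi(N)/12+1)}$ to arise precisely here, by comparing Fourier coefficients $a_\ell(E) \equiv a_\ell(g) \pmod{\mathfrak{p}}$ for small primes $\ell$: the number of newforms is at most $1 + \psi(N)/12$, the Hecke eigenvalues $a_\ell(g)$ lie in a number field whose degree and discriminant are controlled by $\psi(N)$, and the Weil bound $|a_\ell| \leq 2\sqrt{\ell}$ limits the possible congruences, so that a prime $p$ larger than $\psi(N)^{\psi(N)/12+1}$ cannot divide any of the finitely many nonzero norms $N_{K/\Q}(a_\ell(E) - a_\ell(g))$ unless genuine equality of the associated curves holds.

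Once I am reduced to the case that $E$ is isogenous to some $E'/\Q$ of conductor dividing $N$, the final step is arithmetic rather than modular. The isogenous curve $E'$ has good reduction outside the primes dividing $N$, so its discriminant is supported on those primes; but the Frey curve $E$ has discriminant of the shape $\Delta_E = C \cdot D^p$ with $D$ divisible by $y$ (hence by $\alpha^n$-scale factors coming from $u_n = y^p$). Matching the valuations of the minimal discriminant along the isogeny class, together with the multiplicative structure $u_n = y^p$ and the Binet growth $|u_n| \asymp |\alpha|^n / (\alpha-\beta)$, pins down $n$ and hence bounds $p$. This is where the term $4\log|\alpha| \cdot \max\{30, N+1\}$ appears: the isogeny class of $E'$ contains only finitely many curves (at most a constant times $N+1$ worth of $\ell$-isogeny steps), each with discriminant bounded in terms of $N$, which forces $p \cdot \upnu_\ell(D) \leq \upnu_\ell(\Delta_{E'}) + O(\log N)$, and translating through $|y|^p = |u_n| \approx |\alpha|^n$ yields $p \leq 4\log|\alpha| \cdot (N+1)$.

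The main obstacle I anticipate is the irrational-newform bookkeeping in the second step: making the constant $\psi(N)^{(\psi(N)/12+1)}$ come out exactly requires a careful effective estimate on how large a prime $p$ can be while still dividing the norm of a difference of Hecke eigenvalues, uniformly over all at most $1 + \psi(N)/12$ newforms and over a sufficient supply of auxiliary primes $\ell$. One must ensure the chosen $\ell$ genuinely separate the Frey representation from every non-isogenous newform, which typically uses that the Frey curve has multiplicative reduction at primes dividing $y$, so that $a_\ell(E) \equiv \pm(\ell+1) \pmod p$ while $|a_\ell(g)| \leq 2\sqrt{\ell}$; guaranteeing such a separating $\ell$ below an explicit bound is the delicate point, and it is what ultimately makes the argument conditional only on Frey--Mazur rather than unconditional.
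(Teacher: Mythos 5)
Your overall skeleton matches the paper's: attach a Frey curve, level-lower to level dividing $N$, split according to whether the resulting newform is rational or irrational, bound $p$ by Fourier-coefficient congruences in the irrational case, and invoke Frey--Mazur in the rational case. The irrational case is essentially right in spirit, though the mechanism you flag as delicate is resolved more simply than you suggest: the paper applies the Sturm bound to $f$ and a Galois conjugate $f^\sigma$ (both newforms of the same level) to produce a prime $\ell \leq \tfrac{1}{6}\psi(N)$ with $c_\ell \notin \Z$, and then $\prod_{-2\sqrt{\ell}<r<2\sqrt{\ell}} N_{K/\Q}(c_\ell - r)$ is automatically a nonzero integer divisible by $p$ and bounded by $(\ell+1+2\sqrt{\ell})^{n_K}$; no multiplicative-reduction trick is needed.

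The genuine gap is in the rational-newform case. After Frey--Mazur gives $E$ isogenous to $E'$, the paper's deduction is simply that $N_E = N_{E'}$, and since $N_{E'}$ is the lowered level (supported on $2c(b^2+4c)$) while $N_E$ contains $\rad'(y)$, one concludes $\rad(y) \mid \rad(2c(b^2+4c))$, i.e.\ $u_n = y^p$ has all prime factors in a fixed finite set. The crucial missing ingredient in your argument is then Gy\H{o}ry's theorem (Theorem \ref{smoothterm} in the paper): a Lucas number whose prime factors all lie in $\{p_1,\dots,p_m\}$ has index $n \leq \max\{30, p_m+1\}$. Only this bounds $n$ by $\max\{30, N+1\}$, after which the elementary estimate $2^p \leq |u_n| \leq n|\alpha|^{n-1}$ gives $p \leq 4n\log|\alpha|$. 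Your proposed substitute --- matching discriminant valuations along the isogeny class and asserting each curve has ``discriminant bounded in terms of $N$'' with an $O(\log N)$ correction --- does not work: elliptic curves of conductor dividing $N$ do not have minimal discriminant effectively bounded in any such elementary way (Frey curves themselves are the standard counterexample to that intuition), and even granting conductor equality you would at best be constraining $\upnu_\ell(y)$, not the index $n$; the passage from ``$y$ is $S$-smooth'' to ``$n$ is small'' is a deep arithmetic fact about Lucas sequences (resting on primitive divisors and linear forms in logarithms), not a consequence of the isogeny structure.
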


\noindent
We break this into the following two theorems.

\begin{thm}[]\label{bound_av}
Define ${\AV}(N)$ to be the maximum prime $p \in \Z$ such that for some higher dimensional modular abelian variety of level $N$, the mod $\mathfrak{p}$ Galois representation is isomorphic to the mod $p$ Galois representation of an elliptic curve of possibly greater conductor over $\Q$, for $\mathfrak{p}$ a prime above $p$.  Then
\[{\AV(N)} \leq \psi(N)^{(\psi(N)/12+1)}. \]
\end{thm}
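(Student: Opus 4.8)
The plan is to exploit the fact that a higher-dimensional modular abelian variety $A_f$ of level $N$ corresponds to a newform $f \in S_2(\Gamma_0(N))_{\text{new}}$ whose field of Fourier coefficients $K_f = \Q(\{a_\ell(f)\})$ has degree $d = \dim A_f \geq 2$ over $\Q$. Since the dimensions of the simple factors sum to the dimension of the newform space, $d \leq \dim S_2(\Gamma_0(N))_{\text{new}} \leq 1 + \psi(N)/12$. Suppose $\rho_{f,\mathfrak{p}} \simeq \rho_{E,p}$ for an elliptic curve $E/\Q$ and a prime $\mathfrak{p} \mid p$ of $\O_{K_f}$. At every prime $\ell \nmid Np$ the representation $\rho_{f,\mathfrak{p}}$ is unramified, hence so is $\rho_{E,p}$, and for $p$ large this forces $E$ to have good reduction at $\ell$; comparing traces of Frobenius gives the congruence
\[ a_\ell(f) \equiv a_\ell(E) \pmod{\mathfrak{p}}, \]
the starting point for everything that follows.

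First I would convert a single such congruence into a bound on $p$. If $\ell \nmid Np$ is a prime of good reduction for $E$ with $a_\ell(f) \notin \Q$, then $a_\ell(f) - a_\ell(E)$ is a nonzero algebraic integer of $\O_{K_f}$ divisible by $\mathfrak{p}$, so $p \mid \mathrm{Nm}_{K_f/\Q}(a_\ell(f) - a_\ell(E))$ and hence
\[ p \leq \left| \mathrm{Nm}_{K_f/\Q}\bigl(a_\ell(f) - a_\ell(E)\bigr) \right| = \prod_{\sigma} \left| a_\ell(f^\sigma) - a_\ell(E) \right| \leq (4\sqrt{\ell})^{d}, \]
where the product runs over the $d$ embeddings $\sigma \colon K_f \hookrightarrow \C$ and each factor is controlled by the Hasse bound $|a_\ell(E)| \leq 2\sqrt{\ell}$ together with the Eichler--Shimura/Ramanujan bound $|a_\ell(f^\sigma)| \leq 2\sqrt{\ell}$ for weight $2$ newforms.

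The remaining, and most delicate, step is to locate a prime $\ell$ small enough that $(4\sqrt\ell)^d \leq \psi(N)^d$ (i.e.\ $4\sqrt\ell \le \psi(N)$) at which $a_\ell(f) \notin \Q$. For this I would use Sturm's bound. Because $d \geq 2$ there is a nontrivial $\sigma \in \Gal(\bar\Q/\Q)$ with $f^\sigma \neq f$, and at every prime $\ell \mid N$ the coefficient $a_\ell(f)$ is rational (it lies in $\{0,\pm 1\}$), so $f$ and $f^\sigma$ automatically agree at the bad primes. Were $a_\ell(f) = a_\ell(f^\sigma)$ for all good primes $\ell \leq \psi(N)/6$, then the Hecke recursion and multiplicativity would force $a_n(f) = a_n(f^\sigma)$ for all $n$ up to the Sturm bound $\tfrac{2}{12}[\Gamma : \Gamma_0(N)] = \psi(N)/6$, whence $f = f^\sigma$, a contradiction. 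Thus some good prime $\ell \leq \psi(N)/6$ satisfies $a_\ell(f^\sigma) \neq a_\ell(f)$, so $a_\ell(f) \notin \Q$.

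Finally I would assemble the pieces. We may assume $p > \psi(N)/6$, since otherwise $p$ already lies well below the claimed bound; this guarantees the prime $\ell \leq \psi(N)/6$ produced above satisfies $\ell \nmid p$, so the congruence at $\ell$ applies. Since only $\psi(N) \geq 12$ is relevant (otherwise $\dim S_2(\Gamma_0(N))_{\text{new}} \le 1$ and no higher-dimensional $A_f$ exists), we have $4\sqrt{\ell} \leq 4\sqrt{\psi(N)/6} \leq \psi(N)$, and therefore
\[ p \leq (4\sqrt{\ell})^{d} \leq \psi(N)^{d} \leq \psi(N)^{(\psi(N)/12 + 1)}. \]
I expect the Sturm-bound step to be the main obstacle: one must simultaneously ensure that the small prime $\ell$ certifying $a_\ell(f) \notin \Q$ can be taken below $\psi(N)/6$, that the rational coefficients at bad primes do not interfere with the comparison of $f$ and $f^\sigma$, and that at such $\ell$ the reduction type of $E$ indeed pins the Frobenius trace down to a rational integer of size $O(\sqrt\ell)$ (the multiplicative-reduction case being absorbed into the same estimate).
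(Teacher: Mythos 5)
Your proposal is correct and follows essentially the same route as the paper: use Sturm's bound to locate a prime $\ell \leq \psi(N)/6$ with $c_\ell \notin \Q$, compare Fourier coefficients mod $\mathfrak{p}$, and bound $p$ by the norm of the difference using the degree bound $n_K \leq 1 + \psi(N)/12$. The only wrinkle is your claim that unramifiedness ``forces good reduction'' of $E$ at $\ell$ (multiplicative reduction with $p \mid \ord_\ell(\Delta)$ is also possible, giving $c_\ell \equiv \pm(\ell+1) \pmod{\mathfrak{p}}$), but you note this case at the end and it is handled identically in the paper via the factor $N_{K/\Q}\bigl(c_\ell^2 - (\ell+1)^2\bigr)$, with the slightly larger per-embedding bound $\ell + 1 + 2\sqrt{\ell}$ still below $\psi(N)$.
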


\begin{thm}[]\label{bound_ell}
Let ${\Ell}(b,c)$ be the maximum prime $p \in \Z$ such that any Frey curve for the recurrence relation (b,c) level lowers, mod $p$, to an elliptic curve.  Assuming the Frey-Mazur Conjecture, 
\[{\Ell}(b,c) \leq \max\left\{17, 4\log{|\alpha|} \cdot\max\{30, (N+1)\} \right\}, \]
where $N = N(b,c)$ is as in Theorem \ref{condbound_inplace}.
\end{thm}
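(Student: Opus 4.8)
The plan is to show that, once $p$ exceeds the asserted bound, no Frey curve attached to a nontrivial solution $(n,y,p)$ can level lower to an elliptic curve, the contradiction coming from the growth of $u_n$. Suppose $E$ is such a Frey curve (from Lemma \ref{freycurves}, so $n \geq 7$) and that its level-lowered newform corresponds to an elliptic curve $E'$, necessarily of conductor $N' \mid N$; then $\rho_{E,p} \simeq \rho_{E',p}$. For $p > 17$ the representation $\rho_{E,p}$ is absolutely irreducible by Lemma \ref{absirr}, so level lowering is legitimate and no $p$-isogeny can intervene; I would take the constant $17$ to also accommodate the threshold in the uniform form of Frey--Mazur over $\Q$. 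Invoking the Frey--Mazur Conjecture then upgrades the isomorphism $\rho_{E,p}\simeq\rho_{E',p}$ to an actual $\Q$-isogeny $E \sim E'$.

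The first consequence I would extract is that $E$ and $E'$ share a conductor and hence have the same reduction type at every prime; this is the effective heart of the argument. At any prime $\ell \mid y$ with $\ell \nmid 2c(b^2+4c)$, the discriminant in Lemma \ref{freycurves} satisfies $v_\ell(\Delta_E) \equiv 0 \pmod p$, so $E$ is a Tate curve with multiplicative reduction at $\ell$, whereas $E'$ has good reduction there since $\ell \nmid N'$. Isogeny-invariance of the reduction type rules this out, so every prime dividing $y$, equivalently every prime dividing $u_n = y^p$, must divide $2c(b^2+4c)$. By the coprimality established in the generalization of Lemma \ref{relprime} this is already very restrictive, but the clean contradiction comes from primitive divisors: by the theorem of Bilu, Hanrot, and Voutier, for $n > 30$ the term $u_n$ has a primitive prime divisor $\ell$, and such an $\ell$ is coprime to $bc(b^2+4c)$. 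This contradicts the previous sentence, forcing $n \leq \max\{30, N+1\}$, where the alternative bound $N+1$ comes from the rank-of-apparition estimate $r(\ell) \le \ell + 1$ together with the fact that the primes dividing $2c(b^2+4c)$ are at most $N$.

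It then remains to convert the bound on $n$ into a bound on $p$. For a nontrivial solution $y \neq 0,\pm 1$, so $|y| \geq 2$, while the Binet formula \eqref{binetform} and $b^2+4c>0$ give $|u_n| \leq 2|\alpha|^n/\sqrt{b^2+4c} \leq 2|\alpha|^n$. Taking logarithms in $|y|^p = |u_n|$ yields $p \leq 1 + n\log|\alpha|/\log 2$, and substituting $n \leq \max\{30,N+1\}$ produces $p \leq 4\log|\alpha|\cdot\max\{30,N+1\}$ with room to spare in the constant $4$. Together with the floor $17$ this gives $\Ell(b,c) \leq \max\{17,\,4\log|\alpha|\cdot\max\{30,N+1\}\}$.

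The step I expect to be the main obstacle is the passage from the qualitative Frey--Mazur statement to an effective bound: Frey--Mazur as stated only guarantees isogeny for ``$p$ large,'' so the explicit constant must be supplied entirely by the unconditional portion of the argument (reduction type, primitive divisors, and Binet), with Frey--Mazur invoked only to produce the isogeny once $p$ exceeds the threshold folded into the $17$. Getting the primitive-divisor input exactly right, namely ensuring the divisor is odd and coprime to $c(b^2+4c)$ for all relevant $n$ and confirming that the exceptional indices of Bilu--Hanrot--Voutier do not exceed $\max\{30,N+1\}$, is the delicate point, as is verifying that the generous constant $4$ genuinely dominates $1/\log 2$ together with the lower-order terms across all admissible $(b,c)$.
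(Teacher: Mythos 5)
Your proposal is correct and follows essentially the same route as the paper: Frey--Mazur upgrades the level-lowered isomorphism to an isogeny, equality of conductors forces $\rad(y)\mid\rad(2c(b^2+4c))$, smoothness of $u_n$ bounds $n$ by $\max\{30,N+1\}$, and the Binet formula converts this to $p\le 4n\log|\alpha|$. The only cosmetic difference is that you rederive the smoothness-implies-bounded-index step from the Bilu--Hanrot--Voutier primitive divisor theorem, whereas the paper cites this directly as Gy\H{o}ry's theorem (Theorem \ref{smoothterm}), which is proved by exactly that argument.
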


\begin{rem}
We \emph{only} need to assume the Frey-Mazur conjecture for 
\[p > \max\left\{17,   \psi(N)^{(\psi(N)/12 + 1)}  \right\}.\]
\end{rem}

We spend the rest of this section proving these theorems. Theorem \ref{condbound_inplace} immediately follows.  Note that the bound for higher dimension abelian varieties is not conditional on the Frey-Mazur Conjecture.

\subsection{Elliptic Curve Case and the Frey-Mazur Conjecture}\label{fm}

In the results that follow, we rely upon the following empirically-supported question of Mazur \cite{mazur78}, now referred to as the Frey-Mazur Conjecture, concerning the possibility of isomorphic Galois representations arising from non-isogenous elliptic curves.

\begin{conj}[Frey-Mazur]\label{FreyMazur}
Let $p > 17$, and $E_1$ and $E_2$ be elliptic curves over $\Q$, with mod $p$ Galois representations $\rho_{E_1,p}$ and $\rho_{E_2,p}$.  If
\[ \rho_{E_1,p} \simeq \rho_{E_2,p} \]
then $E_1$ is isogenous to $E_2$.
\end{conj}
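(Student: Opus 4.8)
The statement to be established is the Frey-Mazur Conjecture, which remains open; accordingly what follows is the natural line of attack together with an honest account of exactly where it breaks down, rather than a complete argument.

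The plan is to translate the hypothesis into the existence of a rational point on a twist of the full modular curve $X(p)$. Fix $E_1$ and set $\bar\rho = \rho_{E_1,p}$. The Weil pairing identifies $\det\bar\rho$ with the mod-$p$ cyclotomic character $\chi_p$, so any isomorphism $\rho_{E_1,p} \simeq \rho_{E_2,p}$ must carry the Weil pairing on $E_1[p]$ to a nonzero scalar multiple of the Weil pairing on $E_2[p]$; splitting according to whether that scalar is a square in $\F_p^\ast$ separates the \emph{symplectic} from the \emph{antisymplectic} case. In either case, the datum of a curve $E_2$ together with a $G_\Q$-equivariant isomorphism $E_2[p] \cong \bar\rho$ of the prescribed type is a moduli problem represented by a twist $X_{\bar\rho}^{\pm}(p)$ of $X(p)$ over $\Q$, and an elliptic curve $E_2$ with $\rho_{E_2,p} \simeq \bar\rho$ is precisely a non-cuspidal rational point on one of these two twists.

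First I would record the geometry. Twisting preserves genus, so $X_{\bar\rho}^{\pm}(p)$ has the genus of $X(p)$, namely $g = 1 + \frac{(p^2-1)(p-6)}{24}$, which equals $3$ at $p=7$ and exceeds $1$ for all $p \geq 7$. By Faltings' theorem each twist then has only finitely many rational points. Next I would identify the \emph{expected} points: every $E_2$ isogenous to $E_1$ produces a rational point (e.g. $E_2 = E_1$), and one must also account for the cusps and for CM points, the sporadic appearances of which are what force the hypothesis $p > 17$. The conjecture becomes the assertion that for $p > 17$ the only non-cuspidal, non-CM rational points on $X_{\bar\rho}^{\pm}(p)$ arise from curves isogenous to $E_1$, and — crucially — that this holds \emph{uniformly} as $\bar\rho$ (equivalently $E_1$) ranges over all elliptic curves and $p$ over all primes exceeding $17$.

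The hard part, and the reason this is a conjecture rather than a theorem, is exactly that uniformity combined with effectivity. Faltings' theorem is ineffective and applies to each individual twist in isolation; it supplies neither a bound independent of the infinitely many twists $X_{\bar\rho}^{\pm}(p)$ at a fixed $p$, nor any control as $p \to \infty$, nor the sharp threshold $17$. One would like to imitate Mazur's method, but unlike $X_0(p)$ these twists carry no canonical rational cusp and no Eisenstein/rank-zero structure, so neither Mazur's argument nor Chabauty-Coleman applies off the shelf; the Mordell-Weil ranks of the Jacobians vary unpredictably with the twist. The recent uniform Mordell bounds reduce the problem to a uniform bound on those ranks, but such a bound is unknown, and even granting it one must still pin down \emph{which} points occur and extract the explicit constant. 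My expectation is that any unconditional proof must pair a uniform rank-and-height bound for the Jacobians of all these twists with an arithmetic input — large-image or level-lowering results — that annihilates the sporadic points once $p$ is large; securing all three inputs \emph{simultaneously} across every twist and every large $p$ is the genuine obstacle.
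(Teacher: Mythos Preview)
Your write-up is appropriate: the statement is labeled a \emph{conjecture} in the paper precisely because it is open, and the paper makes no attempt to prove it. It is introduced only as a standing hypothesis on which Theorems~\ref{condbound} and~\ref{bound_ell} are conditioned. So there is no ``paper's own proof'' to compare against; your decision to give the moduli-theoretic reformulation via the twists $X_{\bar\rho}^{\pm}(p)$ of $X(p)$ and to isolate the uniformity/effectivity obstruction is exactly the right way to handle a request to ``prove'' an open conjecture. The content you give --- genus of $X(p)$, Faltings giving finiteness twist-by-twist but nothing uniform, absence of a rational cusp blocking a Mazur-style argument, and the role of the threshold $17$ in excluding known sporadic congruences --- is accurate and is the standard picture.

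One small sharpening you could add for completeness: the symplectic/antisymplectic dichotomy can be phrased slightly more carefully, since an isomorphism of Galois modules need not a priori respect the Weil pairing up to scalar; rather, any $G_\Q$-equivariant isomorphism $E_1[p]\to E_2[p]$ automatically scales the pairing by some element of $\F_p^\times$ because both pairings have the cyclotomic character as determinant, and it is the class of that scalar in $\F_p^\times/(\F_p^\times)^2$ that determines which twist carries the point. This is implicit in what you wrote but worth stating as a lemma if you were writing it out in full. Otherwise nothing is missing: the honest acknowledgement that no complete argument exists is the correct answer here.
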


We prove our result conditional on the Frey-Mazur Conjecture. Our strategy is as follows: if the Frey curve $E$ corresponding to a solution $(n,y,p)$, $p > 17$, level lowers mod $p$ to an elliptic curve $F$, then the Frey-Mazur Conjecture implies that $N_E = N_F$. This forces $\rad(y)$ to divide the product of a fixed set of primes, depending only on $(b,c)$. Then, using general results on smooth numbers in recurrence sequences, we obtain a bound on $n$, in terms of $(b,c)$. We can then bound $p$ such that $y^p = u_n$ in terms of $n$.

\begin{thm}[\cite{gyory82}, \cite{gyory81}, \cite{gyory03}]\label{smoothterm}
Let $S$ be the set of all integers whose prime factors lie in some finite set $\{p_1,p_2,...,p_m\}$ with $p_m \geq p_i$ for all $i$.  Let $u_n$ be a Lucas sequence; if $u_n \in S$ then
\[ n \leq \max\{30, p_m +1 \}. \]
\end{thm}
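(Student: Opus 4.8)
The plan is to reduce the statement to the theory of primitive prime divisors, for which the one essential input is the Bilu--Hanrot--Voutier theorem. Recall that a prime $\ell$ is a \emph{primitive divisor} of $u_n$ if $\ell \mid u_n$ but $\ell \nmid (\alpha-\beta)^2 u_1 \cdots u_{n-1}$; in particular a primitive divisor is automatically coprime to the discriminant $b^2+4c$. I would prove the contrapositive: assuming $n > \max\{30, p_m+1\}$, I would exhibit a prime factor of $u_n$ lying outside $\{p_1,\dots,p_m\}$, so that $u_n \notin S$.

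First, since $n > 30$, the Bilu--Hanrot--Voutier theorem guarantees that $u_n$ has a primitive prime divisor $\ell$. The threshold $30$ is precisely the index beyond which their classification of exceptional terms is empty for every nondegenerate Lucas sequence, which is exactly why the constant $30$ appears in the bound. This is the sole deep ingredient of the argument.

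Next I would invoke the \emph{rank of apparition} $\rho(\ell)$, the least positive index $k$ with $\ell \mid u_k$, together with two elementary facts from the divisibility theory of Lucas sequences. First, the set $\{k : \ell \mid u_k\}$ consists exactly of the multiples of $\rho(\ell)$; since $\ell$ is a \emph{primitive} divisor of $u_n$ this forces $\rho(\ell) = n$ (no proper divisor of $n$ can equal $\rho(\ell)$, as that would put a divisibility $\ell \mid u_k$ at some $k < n$). Second, because $\ell \nmid b^2+4c$, the Binet formula \eqref{binetform} gives $\rho(\ell) \mid \ell - \left(\frac{b^2+4c}{\ell}\right)$, and hence $\rho(\ell) \leq \ell + 1$. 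Combining, $n = \rho(\ell) \leq \ell + 1$, so $\ell \geq n - 1 > p_m$. Thus $\ell \notin \{p_1,\dots,p_m\}$, and $u_n$ has a prime factor outside $S$, contradicting $u_n \in S$.

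The main obstacle lies entirely in the first step: securing a primitive divisor for all $n > 30$ is the content of Bilu--Hanrot--Voutier and is genuinely hard, resting on lower bounds for linear forms in logarithms combined with a substantial computation to clear the small exceptional indices. Once that is granted, the remainder — relating primitivity to the rank of apparition and bounding $\rho(\ell)$ by $\ell+1$ via the divisibility $\rho(\ell) \mid \ell - \left(\frac{b^2+4c}{\ell}\right)$ — is standard and elementary, and is where the $p_m+1$ term of the bound originates.
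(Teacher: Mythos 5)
Your argument is correct and is essentially the standard proof underlying the cited results of Gy\H{o}ry (the paper itself imports Theorem \ref{smoothterm} from \cite{gyory82}, \cite{gyory81}, \cite{gyory03} without proof): the constant $30$ comes from Bilu--Hanrot--Voutier's primitive divisor theorem, and the $p_m+1$ term comes from the law of apparition forcing the primitive divisor $\ell$ of $u_n$ to satisfy $n = \rho(\ell) \mid \ell - \left(\frac{b^2+4c}{\ell}\right)$. The only detail worth making explicit is that $\ell \nmid 2c$ (which follows from $\ell \nmid b^2+4c$ together with $\ell > 30$) so that the index set $\{k : \ell \mid u_k\}$ really is the set of multiples of $\rho(\ell)$ and the apparition law applies.
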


We can consequently bound $p$ in terms of $n$:

\begin{lem}\label{boundpintermsn}
For all solutions $u_n = y^p$, 
\[ p \leq 4n \log|\alpha|  \]
where $\alpha$ is the dominant root of the characteristic polynomial for $(b,c)$.
\end{lem}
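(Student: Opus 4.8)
The plan is to pit the growth of $u_n$, governed by the dominant root $\alpha$ through the Binet formula \eqref{binetform}, against the lower bound on $|u_n|$ forced by its being a genuine perfect power. Throughout I treat a nontrivial solution, so that $|y| \ge 2$; since $u_1 = 1$, nontriviality also gives $n \ge 2$. From $|u_n| = |y|^p \ge 2^p$ I get immediately
\[ p \le \frac{\log|u_n|}{\log 2}, \]
so everything reduces to an upper bound on $\log|u_n|$ of the shape $n\log|\alpha|$.

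For the upper bound I would use that the roots are real with $|\alpha| \ge |\beta|$, and that $\alpha - \beta = \sqrt{b^2+4c} \ge 1$ because $b^2 + 4c$ is a positive integer. Then
\[ |u_n| = \left| \frac{\alpha^n - \beta^n}{\alpha - \beta} \right| \le \frac{|\alpha|^n + |\beta|^n}{|\alpha - \beta|} \le 2|\alpha|^n. \]
To fold the constant $2$ into the exponent, I would show $2 \le |\alpha|^n$ for $n \ge 2$, i.e.\ that $|\alpha| \ge \sqrt2$; granting this, $2|\alpha|^n \le |\alpha|^{2n}$ and hence $\log|u_n| \le 2n\log|\alpha|$, which combined with the previous display yields
\[ p \le \frac{2}{\log 2}\, n\log|\alpha| < 4\, n\log|\alpha|, \]
since $2/\log 2 \approx 2.89$.

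The one real point --- and the step I expect to be the crux --- is the lower bound $|\alpha| \ge \sqrt2$, where the discreteness coming from integrality together with nondegeneracy is essential (for $|\alpha|$ arbitrarily near $1$ the claimed inequality would simply be false). I would argue as follows: from $\alpha\beta = -c$ and $|\alpha| \ge |\beta|$ one has $|\alpha|^2 \ge |c|$, so $|\alpha| \ge \sqrt2$ already whenever $|c| \ge 2$. The case $|c| = 1$ is a short finite check: using $b \ne 0$ and $b^2+4c > 0$, one finds that $c=1$ forces $|\alpha| = (|b| + \sqrt{b^2+4})/2 \ge (1+\sqrt5)/2$, while $c=-1$ forces $|b| \ge 3$ and $|\alpha| \ge (3+\sqrt5)/2$; in both subcases $\alpha/\beta$ is readily checked to be no root of unity, consistent with nondegeneracy, and $|\alpha|$ comfortably exceeds $\sqrt2$ (the extremal sequence being Fibonacci). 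This finishes the bound.
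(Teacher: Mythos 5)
Your proposal is correct and follows essentially the same route as the paper: both compare $2^p \le |u_n|$ against an upper bound for $|u_n|$ of the form (constant)$\cdot|\alpha|^n$ and then absorb the constant using a lower bound on $|\alpha|$ (the paper bounds $|u_n| \le n|\alpha|^{n-1}$ via the factorization $u_n = \alpha^{n-1}+\alpha^{n-2}\beta+\cdots+\beta^{n-1}$ and asserts $|\alpha|\ge(1+\sqrt5)/2$, whereas you bound $|u_n|\le 2|\alpha|^n$ via $|\alpha-\beta|\ge 1$). Your explicit verification of the lower bound $|\alpha|\ge\sqrt2$ is a nice touch, since the paper states its corresponding inequality without proof.
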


\begin{proof}
It is clear that
\[p\log{2} \leq \log \bfrac{\alpha^n - \beta^n}{\alpha-\beta}  = \log|\alpha^{n-1}+ \alpha^{n-2}\beta+...+\beta^{n-1}|. \]
Thus as $|\alpha| \geq (1 + \sqrt{5})/2$ 
\begin{align*}
p & \leq  \frac{1}{\log{2}} \cdot \log|n\alpha^{n-1}| \\
%& \leq n \log_2|\alpha| +\log_2(n) \\
 & \leq 4\cdot n \log|\alpha|. && \qedhere
\end{align*}
\end{proof}

\begin{proof}[Proof of Theorem \ref{bound_ell}]
Assume there exists a solution $u_n = y^p$, $n > 6$.
We associate a Frey curve $E$ to the Diophantine equation
\[ y^{2p} +4(-c)^n = v_n^2 \]
as in Lemma~\ref{freycurves}.  By Lemma \ref{freycurves}, if $b,c$ are relatively prime, the conductor of $E$ is
\[ N_E = 2^{\gamma}  \cdot \rad'( c \cdot (b^2+4c) \cdot y) \qquad \qquad \gamma \leq 8,\]
Similarly if $A \mid b,c$ is the largest common factor of $b,c$, then
\[ N_E = 2^{\gamma} \cdot \rad'(A')^2\cdot\rad'(cy(b^2+4c)/A^2), \]
where $A'$ is the squarefree part of $A$.
By Lemma~\ref{unram}, the mod $p$ Galois representation $\rho_{E,p}$ is unramified outside $pN_E$, and finite flat at $p \notdiv c(b^2+4c)$. By assumption, there exists an elliptic curve $F$ such that
\[ \rho_{E,p} \simeq \rho_{F,p} \]  
with
\[N_F = 2^{\gamma} \cdot  \rad'(A')^2 \cdot \rad'( c \cdot (b^2+4c) / A^2) \leq 2^8 \cdot \rad'(c) \cdot \rad'(b^2+4c). \]
Invoking the Frey-Mazur Conjecture \ref{FreyMazur}, $E$ and $F$ are isogenous, and thus $N_E = N_F$.  But these differ exactly in the primes dividing $y$, thus
\[ \rad(y) \mid \rad(2c \cdot (b^2+4c)). \]
 An application of Theorem~\ref{smoothterm} and Lemma~\ref{boundpintermsn} concludes the proof.  Note that the tightest bound proved here depends only upon the largest prime factor of $N$.
\end{proof}

\subsection{Higher Dimensional Abelian Variety Case}
We prove a general upper bound on the primes $p$ for which an irrational newform has mod $\mathfrak{p}$ Galois representation isomorphic to that of an elliptic curve of possibly higher conductor, for $\mathfrak{p}$ lying above $p$.
 
Let $E$ be an elliptic curve of conductor $N_E$ and let $a_\ell$ denote the coefficients of the $L$-function of $E$.  If $\rho_{E,p} \simeq \rho_{f,p}$ for $f$ a newform of level $N_f$ with Fourier coeffients $c_\ell\in \O_f$ for $K =  \Q(...,c_\ell,...)$ of degree $n_K = [K:\mathbb{Q}]$, then we have the following important lemma on necessary congruences.

\begin{lem}[\cite{cohen07}]\label{ircong1}
There exists a prime $\mathfrak{p} \mid p$ of $\mathcal{O}_f$ such that, for $\ell$ prime:
\begin{itemize}
\item $c_\ell \equiv a_\ell \mod \mathfrak{p}$, if $\ell \nmid pN_fN_E$
\item $c_\ell^2 \equiv (\ell+1)^2 \mod \mathfrak{p}$, if $\ell \mid\mid N_E, \ell \notdiv pN_f$.
\end{itemize}
Further, as $|a_\ell| < 2\sqrt{\ell}$,
\[p \mid \gcd_{\ell^2 \nmid N}(B(\ell)C(\ell)), \] where
\[B(\ell) = \ell \cdot N_{K / \mathbb{Q}}(c_\ell^2-(\ell+1)^2) \]
\[C(\ell) = \prod_{-2\sqrt{\ell} < r < 2\sqrt{\ell}}{N_{K / \mathbb{Q}}}(c_\ell - r).\]
\end{lem}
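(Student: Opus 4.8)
The lemma (Lemma \ref{ircong1}) has two parts. The first part asserts the existence of a prime $\mathfrak{p} \mid p$ in $\mathcal{O}_f$ satisfying two congruences: $c_\ell \equiv a_\ell \pmod{\mathfrak{p}}$ when $\ell \nmid pN_fN_E$, and $c_\ell^2 \equiv (\ell+1)^2 \pmod{\mathfrak{p}}$ when $\ell \mid\mid N_E$ but $\ell \nmid pN_f$. The second part converts these congruences, using the Weil/Hasse bound $|a_\ell| < 2\sqrt{\ell}$, into the divisibility $p \mid \gcd_{\ell^2 \nmid N}(B(\ell)C(\ell))$.

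Let me think about how I'd prove each part.

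**Part 1: the congruences from isomorphic mod $p$ representations.**

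The hypothesis is $\rho_{E,p} \simeq \rho_{f,p}$. These are both 2-dimensional representations of $G_\mathbb{Q}$. The representation $\rho_{f,p}$ is valued in $\GL_2(\mathcal{O}_f/\mathfrak{p})$ for $\mathfrak{p}$ above $p$; the isomorphism of $\rho_{E,p}$ with $\rho_{f,p}$ should be understood after extending $\rho_{E,p}$ to $\GL_2(\mathbb{F}_p) \hookrightarrow \GL_2(\mathcal{O}_f/\mathfrak{p})$, i.e., the isomorphism holds over the residue field at some specific $\mathfrak{p}$. So the first thing is: which $\mathfrak{p}$? The existence claim is that there's a compatible choice.

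For a prime $\ell$ of good reduction for both objects and unramified for both representations — which is guaranteed when $\ell \nmid pN_fN_E$ — the characteristic polynomial of $\rho_{f,p}(\Frob_\ell)$ is $T^2 - \bar{c_\ell} T + \ell$ and that of $\rho_{E,p}(\Frob_\ell)$ is $T^2 - \bar{a_\ell}T + \ell$ (reductions mod $\mathfrak{p}$ and $p$ respectively). Isomorphic semisimple representations have equal characteristic polynomials of Frobenius, so taking traces gives $a_\ell \equiv c_\ell \pmod{\mathfrak{p}}$. That's the first congruence.

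For the second congruence, $\ell \mid\mid N_E$ means $E$ has multiplicative reduction at $\ell$, and $\ell \nmid pN_f$ means $f$ (hence $\rho_{f,p}$) is unramified at $\ell$ and $\rho_{E,p}$ is unramified at $\ell$ as well (since the representation is isomorphic to one that's unramified there — this is exactly the level-lowering phenomenon). At a prime of multiplicative reduction, $\rho_{E,p}|_{I_\ell}$ is unipotent but the Frobenius trace on the unramified quotient is $\pm 1$, so $a_\ell = \pm 1$ for the curve, but that's not quite what we want. The cleaner statement: because $\rho_{E,p}$ is unramified at $\ell$ (it agrees with the unramified $\rho_{f,p}$), the eigenvalues of $\rho_{E,p}(\Frob_\ell)$ are $\lambda, \ell/\lambda$ where on the Tate-curve side $\lambda = \pm 1$, giving $\tr\rho_{E,p}(\Frob_\ell) \equiv \pm(\ell+1) \pmod p$. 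Matching with $\rho_{f,p}$ gives $c_\ell \equiv \pm(\ell+1)$, hence $c_\ell^2 \equiv (\ell+1)^2 \pmod{\mathfrak{p}}$. I'd lean on the standard theory of the Tate curve (already invoked in Lemma \ref{unram}) to justify the $\pm(\ell+1)$ trace.

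**Part 2: from congruences to the gcd divisibility.**

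Fix $\ell$ with $\ell^2 \nmid N$ (so $\ell \mid\mid N$ or $\ell \nmid N$; one expects this to align with the two cases above). In the good-reduction case, $c_\ell \equiv a_\ell \pmod{\mathfrak{p}}$ with $a_\ell \in \mathbb{Z}$ satisfying $|a_\ell| < 2\sqrt{\ell}$; since we don't know $a_\ell$, we note $\mathfrak{p} \mid (c_\ell - r)$ for \emph{some} integer $r$ in the range $-2\sqrt{\ell} < r < 2\sqrt{\ell}$, hence $\mathfrak{p} \mid \prod_r (c_\ell - r) = C(\ell)$ (viewing $C(\ell)$ as a norm down to $\mathbb{Q}$, which lands in $\mathbb{Z}$). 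Taking norms, $p = N_{K/\mathbb{Q}}(\mathfrak{p})^{1/f_\mathfrak{p}}$-type reasoning gives $p \mid C(\ell)$ after passing to the integer $N_{K/\mathbb{Q}}(c_\ell - r)$. In the multiplicative case, $\mathfrak{p} \mid (c_\ell^2 - (\ell+1)^2)$ gives $p \mid \ell\cdot N_{K/\mathbb{Q}}(c_\ell^2 - (\ell+1)^2) = B(\ell)$ after taking norms. Combining, $p$ divides $B(\ell)C(\ell)$ for every admissible $\ell$, so $p$ divides the gcd over all such $\ell$. The factor $\ell$ in $B(\ell)$ and the product structure of $C(\ell)$ are exactly the devices that replace the unknown integer $a_\ell$ (respectively the sign in $\pm(\ell+1)$) by something computable from $f$ alone.

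**Where the real difficulty lies.** The genuinely delicate point is Part 1's second congruence and the consistency of the \emph{single} $\mathfrak{p}$: the isomorphism $\rho_{E,p}\simeq\rho_{f,p}$ fixes an embedding of residue fields, and I must ensure the \emph{same} $\mathfrak{p}$ works for all $\ell$ simultaneously — this follows because the isomorphism is one global datum, but stating it carefully (especially reconciling the semisimplification issues when $\rho_{E,p}$ might a priori be reducible, and handling the multiplicative-reduction trace via the Tate curve) is the subtle part. I would cite \cite{cohen07} for the precise packaging, since the lemma is attributed there, and focus my own verification on the Tate-curve trace computation and on checking that taking $N_{K/\mathbb{Q}}$ legitimately transfers $\mathfrak{p}$-divisibility to $p$-divisibility in $\mathbb{Z}$.
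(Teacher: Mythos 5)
The paper gives no proof of this lemma at all: it is stated as a direct citation to Cohen's book, so there is nothing in the source to compare line by line. Your reconstruction is the standard argument behind the cited result, and it is essentially sound: Brauer--Nesbitt/Chebotarev matching of characteristic polynomials of Frobenius at primes of good reduction gives the first congruence, the Tate-curve description of $\rho_{E,p}$ at a prime of multiplicative reduction where the representation is forced to be unramified gives $\operatorname{tr}\rho_{E,p}(\Frob_\ell)\equiv\pm(\ell+1)$ and hence the second congruence, and taking ideal norms converts $\mathfrak{p}$-divisibility into $p$-divisibility of the integers $B(\ell)$, $C(\ell)$. The single point where your write-up is off: the factor $\ell$ in $B(\ell)$ is not there to absorb the sign in $\pm(\ell+1)$ --- that sign is already killed by working with $c_\ell^2-(\ell+1)^2=(c_\ell-(\ell+1))(c_\ell+(\ell+1))$. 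It is there to cover the one case your Part 2 silently excludes, namely $\ell=p$: both congruences in Part 1 carry the hypothesis $\ell\nmid p\cdots$, so when $\ell=p$ neither applies and your two sub-arguments give nothing; one instead concludes $p=\ell\mid B(\ell)$ trivially. Without that observation the claim $p\mid\gcd_{\ell^2\nmid N}B(\ell)C(\ell)$ is not established for every admissible $\ell$. This is a one-line repair, not a structural flaw, and everything else (the insistence on a single global $\mathfrak{p}$ coming from the one isomorphism of representations, the semisimplification caveat, the norm computation) is handled correctly.
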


For $n_K > 1$, this gives a nontrivial bound on $p$, as there exists an $\ell$ such that $c_\ell \notin \mathbb{Z}$ and $\ell^2 \nmid N$. For such an $\ell$, the product above is nonzero. We can bound $\ell$ using the following well-known theorem:
\begin{thm}[Sturm's Bound \cite{stein07}]\label{sturm}
Let $f,g \in M_k(\Gamma_0(N))$ have Fourier expansions $\sum_n a_nq^n$ and $\sum_n b_n q^n$ respectively.  Then $f = g$ if and only if $a_n = b_n$ for all
\[ n \leq \frac{k}{12} \cdot \psi(N). \]
\end{thm}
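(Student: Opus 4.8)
The plan is to prove the nontrivial direction: if the initial Fourier coefficients of $f$ and $g$ agree, then $f=g$ (the converse being immediate). Set $h = f-g \in M_k(\Gamma_0(N))$. The hypothesis says precisely that $h$ vanishes at the cusp $\infty$ to order at least $m+1$, where $m = \lfloor \tfrac{k}{12}\psi(N)\rfloor$; here the order of vanishing in the uniformizer $q = e^{2\pi i z}$ equals the index of the first nonzero Fourier coefficient, because the cusp $\infty$ has width $1$ for $\Gamma_0(N)$ (the translation $z\mapsto z+1$ lies in $\Gamma_0(N)$). I want to force $h=0$. The entire argument rests on one input at level one: by the valence formula on $\mathrm{SL}_2(\Z)$, a nonzero modular form of weight $w$ cannot vanish at $\infty$ to order exceeding $w/12$, since all the other local vanishing contributions in that formula are nonnegative.

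To transfer the high vanishing order of $h$ from $\Gamma_0(N)$ up to the full modular group, I would use Sturm's norm trick. Let $\mu = \psi(N) = [\mathrm{SL}_2(\Z):\Gamma_0(N)]$ and choose coset representatives $\gamma_1,\dots,\gamma_\mu$ for $\Gamma_0(N)\backslash \mathrm{SL}_2(\Z)$ with $\gamma_1 = \mathrm{id}$. Since $h$ is $\Gamma_0(N)$-invariant, each slash $h|_k\gamma_i$ depends only on the coset, and right multiplication by any $\gamma \in \mathrm{SL}_2(\Z)$ merely permutes the cosets; hence the product
\[ H \;=\; \prod_{i=1}^{\mu} h|_k\gamma_i \]
satisfies $H|_{k\mu}\gamma = H$ for all $\gamma$, is holomorphic on the upper half-plane and at all cusps, and therefore lies in $M_{k\mu}(\mathrm{SL}_2(\Z))$.

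It then remains to bound $\nu_\infty(H)$ from below. If any factor $h|_k\gamma_i$ vanishes identically, then $h=0$ already and we are done, so I may assume each factor is a nonzero holomorphic function with a well-defined nonnegative order of vanishing at $\infty$. Orders in $q$ add under multiplication, each term is $\geq 0$, and the identity factor $h|_k\gamma_1 = h$ alone contributes order $\geq m+1$. Thus $H$ vanishes at $\infty$ to order at least $m+1 > \tfrac{k}{12}\psi(N) = \tfrac{k\mu}{12}$. By the level-one valence bound, the weight-$k\mu$ form $H$ must vanish identically, and hence so must one of its factors, giving $h=0$ and $f=g$.

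The main obstacle is the bookkeeping at the cusps when forming $H$: one must verify that the non-identity slashes $h|_k\gamma_i$, whose $q$-expansions a priori involve fractional powers $q^{1/w_i}$ governed by the cusp widths, combine into a product with an honest integral $q$-expansion, and that each contributes nonnegative order, so that $\nu_\infty(H)$ is genuinely the sum of the factors' orders. Once the invariance and holomorphy of $H$ are in place, the remainder is just the elementary valence inequality $\nu_\infty \leq w/12$ at level one together with $\lfloor x\rfloor + 1 > x$. An alternative route avoiding the norm trick is to invoke the valence formula directly on $X_0(N)$, whose total degree in weight $k$ is $\tfrac{k}{12}\psi(N)$; then $\nu_\infty(h) \geq m+1 > \tfrac{k}{12}\psi(N)$ already exceeds the total available, again forcing $h=0$.
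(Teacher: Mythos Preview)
The paper does not prove this statement; it is quoted as Sturm's Bound with a citation to \cite{stein07} and used as a black box in the proof of Lemma~\ref{boundell}. There is therefore no ``paper's own proof'' to compare against.

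That said, your argument is the standard one (Sturm's norm trick) and is correct. The key points---forming the product $H = \prod_i h|_k\gamma_i$, observing it lies in $M_{k\mu}(\mathrm{SL}_2(\Z))$, bounding $\nu_\infty(H) \geq m+1 > k\mu/12$ via the identity factor and nonnegativity of the others, and then invoking the level-one valence formula---are all in order. Your flagged ``obstacle'' about fractional $q$-powers is not a real issue: since $H$ is $\mathrm{SL}_2(\Z)$-invariant and holomorphic at the cusps, its $q$-expansion is automatically in integral powers of $q$, and the individual factors contribute nonnegative (rational) orders because $h$ is holomorphic at every cusp. The alternative you mention, applying the valence formula directly on $X_0(N)$, is equally valid and is in fact how the bound is often stated.
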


\begin{lem}\label{boundell}
Let $f$ be an irrational newform of level $N$.  If $\ell$ is the first prime such that $c_\ell \not\in \Z$, then
\[ \ell \leq (1/6) \cdot \psi(N) .\]
\end{lem}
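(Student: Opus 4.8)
The plan is to argue by contradiction, using Sturm's Bound together with the multiplicativity of the Hecke eigenvalues. Suppose that $c_\ell \in \Z$ for every prime $\ell \leq \tfrac{1}{6}\psi(N)$, i.e.\ that the first prime $\ell$ with $c_\ell \notin \Z$ exceeds $\tfrac{1}{6}\psi(N)$; from this I will derive a contradiction with the irrationality of $f$.

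First I would promote the hypothesis on prime-indexed coefficients to all coefficients in the relevant range. Since $f$ is a normalized newform of weight $2$ and trivial nebentypus, its Fourier coefficients satisfy $c_1 = 1$, the multiplicativity $c_{mn} = c_m c_n$ for $\gcd(m,n)=1$, and the recursions $c_{\ell^{k+1}} = c_\ell\, c_{\ell^k} - \ell\, c_{\ell^{k-1}}$ for $\ell \nmid N$ together with $c_{\ell^k} = c_\ell^{\,k}$ for $\ell \mid N$. Thus $c_n$ is an integer-coefficient polynomial expression in the $c_\ell$ with $\ell \mid n$. Any $n \leq \tfrac{1}{6}\psi(N)$ has all of its prime divisors $\ell \leq n \leq \tfrac{1}{6}\psi(N)$, so by hypothesis each such $c_\ell \in \Z$, and hence $c_n \in \Z$ for every $n \leq \tfrac{1}{6}\psi(N)$.

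Next I would compare $f$ with a Galois conjugate. Because $f$ is irrational, its coefficient field $K$ is a nontrivial extension of $\Q$, so there is an automorphism $\sigma$ of $\bar{\Q}$ with $f^\sigma \neq f$, where $f^\sigma = \sum_n c_n^\sigma q^n$. The conjugate $f^\sigma$ is again a newform lying in $S_2(\Gamma_0(N)) \subset M_2(\Gamma_0(N))$. For every $n \leq \tfrac{1}{6}\psi(N)$ we have just shown $c_n \in \Z$, and since $\sigma$ fixes $\Z$ we get $c_n^\sigma = c_n$; hence $f$ and $f^\sigma$ have identical Fourier coefficients for all $n \leq \tfrac{2}{12}\psi(N) = \tfrac{1}{6}\psi(N)$.

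Finally, applying Sturm's Bound (Theorem~\ref{sturm}) in weight $k=2$ forces $f = f^\sigma$, contradicting the choice of $\sigma$. Therefore the first prime $\ell$ with $c_\ell \notin \Z$ must satisfy $\ell \leq \tfrac{1}{6}\psi(N)$. The only steps requiring care are the reduction from prime-indexed coefficients to all coefficients below the bound, which relies on the Hecke relations expressing $c_n$ through the $c_\ell$ with $\ell \mid n$, and the fact that $f^\sigma$ is genuinely a newform of the \emph{same} level $N$, so that Sturm's bound applies directly to the pair $(f, f^\sigma)$.
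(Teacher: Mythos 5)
Your proof is correct and follows essentially the same route as the paper: take a Galois conjugate $f^\sigma \neq f$ (a newform of the same level) and apply Sturm's bound in weight $2$. The paper states this in two lines and leaves implicit the reduction you spell out — that agreement of the prime-indexed coefficients up to $\tfrac{1}{6}\psi(N)$ propagates, via the Hecke relations, to all coefficients in that range — so your version is just a more detailed account of the same argument.
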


\begin{proof}
Since $f$ has some irrational coefficient, we know there exists $\sigma \in G_\Q$, such that $f^{\sigma}$ is a distinct newform, also of level $N$.  Then, by \ref{sturm}, they must differ in a coefficient $c_\ell$ for some 
\[ \ell \leq \frac{1}{6} \psi(N). \qedhere\]
\end{proof}

\begin{proof}[Proof of Theorem \ref{bound_av}]
Let $\ell$ be a prime such that $c_\ell \notin \Z$.  If $\ell \mid N$, then $c_\ell = 0, \pm 1 \in \Z$, so we can assume $\ell \notdiv N$. Thus we can apply Lemma \ref{ircong1}, 
\[ p \mid \ell \cdot N_{K / \mathbb{Q}}(c_\ell^2-(\ell+1)^2) \cdot \prod_{-2\sqrt{\ell} < r < 2\sqrt{\ell}}{N_{K / \mathbb{Q}}}(c_\ell - r).\]
It is clear that $|c_\ell^{\sigma}| < 2\sqrt{\ell}$ for all $\sigma \in \Gal(K/\Q)$. Thus, for $k \leq \ell+1$, \[N(c_\ell - k) \leq (\ell+1 + 2\sqrt{\ell})^{n_{K}}.\] 
By Lemma \ref{boundell}, we may take $\ell \leq (1/6)\psi(N)$, and $n_{K} \leq 1+\psi(N)/12$, thus
\[ p \leq \psi(N)^{(\psi(N)/12+1)}. \qedhere\]
\end{proof}

 \subsection{Proof of Theorem \ref{condbound}}
 Let $E$ be a Frey curve, such as in Lemma~\ref{freycurves}.  Then $\rho_{E,p} \simeq \rho_{f,p}$ for a newform $f$ of level bounded by 
 \[ N = 2^8 \cdot {\rad}'(c) \cdot \rad'(b^2+4c). \]
 If $f$ is irrational, we apply Theorem \ref{bound_av} to conclude that $p \leq \psi(N)^{(\psi(N)/12+1)}$.
 If $f$ is rational, then we apply Theorem \ref{bound_ell} to conclude that $p \leq \max\left\{ 17, 4 \log|\alpha| \cdot \max\{30,(N+1)\} \right\}$. 
 The theorem easily follows.

\section{Examples}\label{examples}

In many cases the conditional bounds achieved are in fact much better than the ``worst case" recorded in Theorem \ref{condbound}.   To demonstrate this fact, we find a sharp bound on $p$ for all Lucas sequences with $1 \leq b,c \leq 10$ and relatively prime. For any specific recurrence relation $(b,c)$, we can use the formulas for associating Frey curves in Lemma~\ref{freycurves} to determine the possible levels to which the mod $p$ Galois representation descends, find all possible $p$ such that the representation arises from an irrational newform, and then invoke the Frey-Mazur Conjecture to determine a bound on the index $n$ for which $u_n=y^p$ for any other $p \geq 17$.  Given the reasonable bound on the index from Theorem~\ref{condbound}, it is trivial to check that there are no unknown perfect powers up to that index.  This leaves a finite list of primes $p$ for which there might exist a perfect $p$th power.

\begin{thm} \label{conditional_bound_p}
Let $(b,c)$ identify a Lucas sequence $u_n$ with $1 \leq b,c \leq 10$ and relatively prime.  Then for all solutions $u_n = y^p$, $p \leq 19$. When additionally $c = 1$, we have $p \leq 17$.
\end{thm}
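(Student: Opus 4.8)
The plan is to treat each sequence $(b,c)$ with $1\le b,c\le 10$ and $\gcd(b,c)=1$ individually via the machinery already assembled, leveraging the fact that the generic bounds in Theorem \ref{condbound_inplace} can be sharpened dramatically once $N$ is pinned down. For each such pair I would first compute $N = 2^8\cdot\rad'(c)\cdot\rad'(b^2+4c)$ and the dominant root $\alpha$ of $g(z)=z^2-bz-c$, which here satisfies $|\alpha|<21$, so $4\log|\alpha|\cdot\max\{30,(N+1)\}$ is a concrete small number for each case. The irrational-newform contribution $\psi(N)^{(\psi(N)/12+1)}$ is the crude bound of Theorem \ref{bound_av}, but in practice I would not use it directly: instead, following the strategy outlined at the start of this section, I would enumerate the actual newforms of level dividing $N$ and apply Lemma \ref{ircong1} to each irrational newform $f$. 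For each $f$, computing $p\mid\gcd_{\ell^2\nmid N}(B(\ell)C(\ell))$ over a handful of small primes $\ell$ with $c_\ell\notin\Z$ leaves only finitely many candidate primes $p$, typically just a few small ones.

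Next I would handle the rational-newform (equivalently, elliptic-curve) case using Theorem \ref{bound_ell}: for $p>17$ the Frey-Mazur Conjecture forces $N_E=N_F$, hence $\rad(y)\mid\rad(2c(b^2+4c))$, so Theorem \ref{smoothterm} bounds $n\le\max\{30,q+1\}$ where $q$ is the largest prime dividing $N$, and Lemma \ref{boundpintermsn} then bounds $p\le 4n\log|\alpha|$. Since the relevant $n$ is small and $|\alpha|$ is modest across all these sequences, this yields $p\le 19$ uniformly; one must verify directly that no $u_n$ up to the index bound is a genuine $p$th power for the surviving small $p$. Combining the two cases gives $p\le 19$ for every such $(b,c)$.

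For the sharper claim when $c=1$, I would exploit the structural simplification $c=1$ brings: $\rad'(c)=1$, so $N=2^8\cdot\rad'(b^2+4)$, which is smaller and has a restricted prime factorization (only $b=5,7,9$ give $c=1$ with $1\le b\le 10$ coprime to $c$, and one checks each). With these smaller levels the explicit newform computation via Lemma \ref{ircong1} eliminates $p=19$, and the elliptic-curve bound from Theorem \ref{bound_ell} likewise falls below $19$, leaving $p\le 17$. Throughout, the small primes $p=2,3,5,7,11,13,17$ not excluded by the congruence/isogeny arguments must be dispatched separately, either by the $(n,n,2)$ and $(n,n,3)$ arguments of Lemma \ref{smallp}-type reasoning or by a direct finite check once $n$ is bounded.

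The main obstacle I anticipate is the irrational-newform case: the worst-case bound $\psi(N)^{(\psi(N)/12+1)}$ is astronomically large, so the theorem is only true because the \emph{actual} newform data is far better behaved than the generic estimate. The real work is the explicit computation — for each level dividing $N$, listing the irrational newforms, selecting small primes $\ell$ with $c_\ell\notin\Z$, and evaluating the norm products $B(\ell)C(\ell)$ to extract the finite admissible set of $p$. This is where the claim that the bounds are ``much better than the worst case'' must be made precise, and it is fundamentally a (modest) machine computation rather than a closed-form argument; the difficulty lies in organizing it uniformly across roughly thirty sequences rather than in any single conceptual step.
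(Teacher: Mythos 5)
Your proposal matches the paper's proof, which is exactly this computation: list the possible levels via Lemma \ref{freycurves}, apply Lemma \ref{ircong1} to the actual irrational newforms at those levels to get a sharp bound on $p$ (this is where $19$ survives in general but is eliminated when $c=1$), and for $p>17$ in the rational case invoke Frey--Mazur and Theorem \ref{smoothterm} to bound $n$, then check all terms up to that bound directly. One small correction: for some of these sequences (e.g.\ $(b,c)=(9,8)$, where $b^2+4c=113$ is prime, so $n$ can a priori be as large as $114$) the bound $4n\log|\alpha|$ of Lemma \ref{boundpintermsn} is far larger than $19$, so the rational-newform case is closed by the direct finite check of the terms $u_n$ up to the smoothness bound, not by that inequality yielding $p\le 19$ ``uniformly.''
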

\begin{proof}
Using the formulas for conductors given in Lemma \ref{freycurves}, we compute the list of possible levels to which the Frey curve descends.  Using Lemma \ref{ircong1} we find a sharp bound on $p$ for the possibility of level-lowering to an irrational newform.  Then for $p > 17$, we use Theorem \ref{smoothterm} to bound the index $n$ and check all terms up to this bound.  The Sage transcript of computations can be found at \cite{code}.
\end{proof}

For any prime $p$, given a bound $n \leq B(p)$ we develop a sieve to find all $p$th powers.  This method has some similarity to the sieve in \cite{siksek06}; however, unlike their sieve which uses Frey curves, our method relies entirely upon elementary techniques.  Moreover, our method is completely general, clearly applying to any Lucas sequence and any index $n$ (in contrast, \cite{siksek06} uses the fact that $n$ is prime and $c = 1$).

Although the sieve is completely general, the bound $B(p)$ is sharpest when $c = 1$ and $b^2+4$ is prime.  To demonstrate this method, we prove the following theorem.

\begin{thm}\label{cond_examples}
Assuming the Frey-Mazur Conjecture, there are no nontrivial perfect powers in the Lucas sequences $(3,1)$, $(5,1)$, and $(7,1)$.
\end{thm}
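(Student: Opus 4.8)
The plan is to reduce to a finite, explicit computation by first bounding the exponent $p$, then bounding the index $n$ for each remaining $p$, and finally running the elementary sieve. Since each of the three sequences has $c = 1$ and $b \in \{3,5,7\}$, so that $1 \le b,c \le 10$ with $b,c$ coprime, Theorem \ref{conditional_bound_p} applies and gives $p \le 17$. Thus it suffices to rule out nontrivial solutions $u_n = y^p$ for the finitely many primes $p \in \{2,3,5,7,11,13,17\}$, handling each of the three sequences $(3,1)$, $(5,1)$, $(7,1)$ in turn.

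The next step is, for each fixed prime $p$, to produce an effective bound $n \le B(p)$ via Thue equations. Here I would exploit \eqref{gen_diophan}, which with $c = 1$ reads $v_n^2 - (b^2+4)\,u_n^2 = \pm 4$; setting $u_n = y^p$ and $D = b^2+4$ (the prime $13$, $29$, or $53$ in our three cases) gives $v_n^2 - D\,(y^p)^2 = \pm 4$, hence the factorization $(v_n - 2)(v_n + 2) = D\,y^{2p}$ when $n$ is even, and the analogous factorization of $v_n^2 + 4$ over $\Z[i]$ when $n$ is odd. Since the two factors on the left are coprime away from a bounded set of primes (here only $2$ and $D$), each factor is, up to a factor drawn from a fixed finite list, a perfect $p$th power; equating the resulting expressions produces finitely many Thue equations of degree $p$, whose solutions are effectively bounded by Baker's method. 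As $v_n$ grows like $|\alpha|^n$, any bound on the solutions translates into an explicit bound $n \le B(p)$. The degenerate case $p = 2$ must be excepted: there the factorization collapses to the Pell relation satisfied by every term of the sequence, so squares are handled by the separate classical analysis of squares in Lucas sequences, while $p = 3$ still fits the superelliptic/Thue framework directly.

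With $B(p)$ in hand, the final step is the elementary sieve of Section \ref{examples}. For a well-chosen set of auxiliary moduli $m$, the reduction $u_n \bmod m$ is periodic with period $\pi(m)$, and the set of residues that are $p$th powers modulo $m$ is severely restricted; intersecting the admissible residue classes of $n$ over several moduli eliminates all but a handful of classes modulo $\mathrm{lcm}(\pi(m))$. The bound $B(p)$ then collapses the surviving classes to a finite set of candidate indices, each checked directly. Carrying this out for every $p \in \{2,3,5,7,11,13,17\}$ and each sequence, with the computation recorded in \cite{code}, rules out all nontrivial $p$th powers and completes the proof.

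I expect the main obstacle to be the sieve step rather than the reduction. The Baker-type bound $B(p)$ is typically astronomically large, so the argument cannot simply enumerate indices up to $B(p)$ and must instead rely on the congruence sieve to annihilate entire residue classes; this requires locating auxiliary moduli whose $p$th-power residue constraints jointly cover all nontrivial classes. Verifying that such a covering set of moduli exists, and is small enough to compute with, for every relevant $p$ and every sequence is where the real work lies. The favorable arithmetic of the cases $c = 1$ with $b^2+4$ prime is precisely what makes this feasible.
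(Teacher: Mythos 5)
Your proposal follows essentially the same route as the paper: invoke Theorem \ref{conditional_bound_p} to get $p \le 17$, convert a putative solution into a Thue equation whose solutions are bounded via Bugeaud--Gy\H{o}ry, translate that into an explicit bound on $n$ (the paper gets $n < 10^{800}$), and finish with the elementary congruence sieve. The only cosmetic difference is that the paper first reduces to odd index $n$ by a generalization of Robbins's argument and then uses the single degree-$p$ Thue form from \cite{siksek06}, whereas you propose separate factorizations for even and odd $n$; your closing remark that the sieve must annihilate residue classes rather than enumerate up to $B(p)$ is exactly the point of the paper's sieve.
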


\subsection{The Sieve}

For a given recurrence sequence $(b,c)$ and prime $p$, we want to find the complete list of solutions to $u_n = y^p$.  We derive congruence conditions on $n$ by examining solutions to $u_n = y^p$ over $\F_q$ for $q \equiv 1 \pmod{p}$.  Note that as $p \mid \#\F_q^\times$, the proportion of $p$th powers in our sequence modulo $q$ should heuristically be roughly $1/p$.

For $K(q)$ the period of the Fibonacci sequence mod $q$, let $\N(q) \subseteq \Z/K(q)\Z $ be the set of possible congruences for $n$.  We tighten this set by ``intersecting" data from several primes $S = \{q_i\}$ agreeing via the Chinese Remainder Theorem, to get the set of congruences
\[ \N(S) \subseteq \Z/K(S)\Z, \qquad \qquad K(S)= \lcm(K(q_1),K(q_2),...).\]  
To make this computation manageable,  we choose $q_i$ such that $K(q_i) \mid q_i-1$, and $q_i-1 \mid M$, for $M$ a particularly smooth, increasing modulus.  Although the sieve is completely elementary, it seems to work as efficiently as the sieve in \cite{siksek06}.

For $K(S) > B(p)$, it is clear that the set of congruences are a complete list globally.  The congruences $n \equiv 0,1 \pmod{K(S)}$ will always be present in $\N(S)$ as they are $p$th powers for any $p$.  In addition, if there exists another $p$th power, this will appear as an added congruence in $\N(S)$ that does not grow with the modulus.  All remaining elements of $\N(S)$ should grow with the modulus $M$.  Clearly, we can rule out these possible congruences if the smallest such $\bar{a} > B(p)$.  An implementation of the sieve is available at \cite{code}.

\subsection{Proof of Theorem \ref{cond_examples}}

From Theorem~\ref{conditional_bound_p}, we find that for the sequences in question, $p\leq 17$.  For a solution $(n,y,p)$ to \eqref{gen_diophan}, with can reduce to odd index $n$ via a generalization of an argument of Robbins \cite{robbins83}.  When $n$ is odd, as in \cite{siksek06}, we convert a solution $(n,y,p)$ to \eqref{gen_diophan} into an integral solution $(X,Y)$ of the Thue equation \begin{equation}
\pm 1  =  b \sum_{k=0}^{\lfloor p/2 \rfloor} (-4)^{\frac{p-2k-1}{2}} {p\choose 2k} X^{2k}Y^{p-2k}  + \sum_{k=0}^{\lfloor p/2 \rfloor} (-4)^{\frac{p-2k-1}{2}} {p\choose 2k+1} X^{2k+1}Y^{p-2k-1},
\end{equation} where $y^2 = 4X^2 + Y^2$. We then obtain an upper bound $\max\{|X|, |Y|\} < B$ using the work of Bugeaud and Gy\H{o}ry on Thue equations \cite{bugeaud96}.  Letting $d = b^2+4$, note that $y^p = \left[ \frac{\alpha^n}{\sqrt{d}} \right]$ and it follows that
\begin{equation} n < \frac{\log\left(\sqrt{5^p d}B^p + \sqrt{d}\right)}{\log|\alpha|}. \end{equation}
In particular, for $(b,c) = (3,1), (5,1),(7,1)$, if $p\leq17$,  then \[ n < 10^{800}. \]
Given a bound $n < B(p)$, our sieve heuristically runs in time $O((\log B(p))^{2+\epsilon})$; thus the above bound is computationally attainable.  In fact, using our sieve, we can rule out $p$th powers for indices less than $10^{800}$ in a computation that takes less than 10 hours \cite{code}.

\bibliographystyle{plain}

%\bibliography{bib}{}
%\bibliographystyle{plain}

\end{document}